\crefname{equation}{equation}{equations}
\Crefname{equation}{Equation}{Equations}
\newtheorem{theorem}{Theorem}
\newtheorem{prop}[theorem]{Proposition}
\newtheorem{lemma}[theorem]{Lemma}
\newtheorem{rmk}{Remark}
\newcommand{\E}{\mathbb{E}}
\newcommand{\bbT}{\mathbb{T}}
\newcommand{\bbR}{\mathbb{R}}
\newcommand{\bbZ}{\mathbb{Z}}
\newcommand{\bbOne}{\mathds{1}}
\newcommand{\rmI}{\mathrm{I}}
\newcommand{\rmII}{\mathrm{II}}
\newcommand{\rmIII}{\mathrm{III}}
\newcommand{\utrunc}{u_\textup{trunc}}
\newcommand{\jBra}[1]{\left\langle #1 \right\rangle}
\newcommand{\abs}[1]{\lvert #1 \rvert}
\newcommand{\lrabs}[1]{\left\lvert #1 \right\rvert}
\title[Wellposedness for periodic NLS with white noise dispersion]{On the wellposedness for periodic nonlinear {Schr\"odinger} equations with white noise dispersion}
\author{Gavin Stewart}
\address{Courant Institute of Mathematical Sciences, New York University, 251 Mercer St., New York, NY 10012, USA}
\address{Department of Mathematics, Rutgers University, 110 Frelinghuysen Rd., Piscataway, NJ, 08854, USA}
\email{\href{mailto:gavin.stewart@rutgers.edu}{gavin.stewart@rutgers.edu}}
\begin{document}

\begin{abstract}
    We study the wellposedness of the periodic nonlinear Schr\"odinger equation with white noise dispersion and a power nonlinearity given by
    \begin{equation*}
        idu = \Delta u \circ dW_t + \abs{u}^{p-1}u\;dt
    \end{equation*}
    We develop Strichartz estimates for this equation, which we then use to prove almost sure global wellposedness of this equation with $L^2$ initial data for nonlinearities with exponent $1 < p \leq 3$.  By generalizing the Fourier restriction spaces $X^{s,b}$ to the stochastic setting, we also prove that our solutions agree with the ones constructed by Chouk and Gubinelli (Commun. Part. Diff. Eq. 40(11):2047--2081, 2015) using rough path techniques.  We also consider the quintic equation ($p=5$), and show that it is analytically illposed in $L^1_\omega C_t L^2_x$. 
\end{abstract}

\maketitle

\section{Introduction}

We will study the nonlinear Schr\"odinger equation with white noise dispersion on the torus:
\begin{equation}\label{eqn:WNDNLS}
    \left\{\begin{array}{l}
        i du = \Delta u\,\circ dW_t + \abs{u}^{p-1} u\;dt\\
        u(t=0) = u_0
    \end{array}\right.
\end{equation}
This equation arises naturally as a scaling limit when considering equations of the form
\begin{equation}\label{eqn:smooth-mod-NLS}
    i \partial_t u = m(t) \Delta u + \abs{u}^{p-1}u
\end{equation}
when $m$ is a mean zero random process satisfying certain assumptions~\cite{martySplittingSchemeNonlinear2006,debussche1DQuinticNonlinear2011,cimpeanNonlinearSchrodingerEquation2019}.  The case when $p = 3$ is of particular interest, since in this case~\eqref{eqn:smooth-mod-NLS} models a signal traveling down an optical fiber, with $t$ representing the distance along the fiber, $x$ the retarded time, and $m$ the coefficient of chromatic dispersion~\cite{agrawalNonlinearFiberOptics2019,agrawalApplicationsNonlinearFiber2008}.  The case when $m$ is deterministic but mean zero is known as dispersion management.  Owing to their engineering applications, dispersion managed systems have received a great amount of attention over the past two decades, see the works~\cite{ablowitzDispersionManagementRandomly2004,turitsynTheoryAveragePulse1997,zharnitskyGroundStatesDispersionmanaged2000,zharnitskyStabilizingEffectsDispersion2001,abdullaevStableTwodimensionalDispersionmanaged2003,kunzeVariationalProblemLack2004,stanislavovaRegularityGroundState2005,hundertmarkDecayEstimatesSmoothness2009,greenExponentialDecayDispersionManaged2016,choiDispersionManagedNonlinear2021,choiThresholdsExistenceDispersion2017,choiWellposednessDispersionManaged2020,murphyModifiedScatteringDispersionmanaged2021,murphyWellposednessBlowupDispersionmanaged2021}  as well as the survey~\cite{turitsynDispersionmanagedSolitonsFibre2012} and references therein.  The problem of dispersion management with random $m$ was considered in~\cite{ablowitzDispersionManagementRandomly2004,malomedPropagationOpticalPulse2001}.

Equations of the form~\eqref{eqn:WNDNLS} were first considered by Marty in~\cite{martySplittingSchemeNonlinear2006} with Lipschitz nonlinearities, where solutions were shown to be globally wellposed.  To handle the non-Lipschitz nonlinearities, it is necessary to develop Strichartz estimates for~\eqref{eqn:WNDNLS}.  This was done on the real line by de Bouard and Debussche in~\cite{debouardNonlinearSchrodingerEquation2010} and later improved upon by Debussche and Tsutsumi in~\cite{debussche1DQuinticNonlinear2011}.  By combining these estimates with a truncation argument introduced in~\cite{debouardStochasticNonlinearSchrodinger1999, debouardStochasticNonlinearSchrodinger2003}, it is possible to prove that~\eqref{eqn:WNDNLS} is globally wellposed in $L^2$ on the real line for $1 < p \leq 5$.  This result can be generalized to quantum graphs: see~\cite{cimpeanNonlinearSchrodingerEquation2019}.  An alternative method for solving equations like~\eqref{eqn:smooth-mod-NLS} for rough modulations $m$ using rough path methods was given by Chouk and Gubinelli in~\cite{choukNonlinearPDEsModulated2015}, where it was used to solve~\eqref{eqn:WNDNLS} for $p=3$ on the real line and torus.  These results give global wellposedness wherever the deterministic equation is wellposed.  Numerical results by Belaouar, de Bouard, and Debussche in~\cite{belaouarNumericalAnalysisNonlinear2015} led the authors to conjecture that this wellposedness in fact holds for all $p < 9$, in agreement with the scaling of~\eqref{eqn:WNDNLS}.  This conjecture is supported by the further numerical calculations by Cohen and Dujardin~\cite{cohenExponentialIntegratorsNonlinear2017} and Laurent and Vilmart~\cite{laurentMultirevolutionIntegratorsDifferential2020}.  If this is indeed the case, it represents a regularization by noise phenomenon for the equation.  If we consider small initial data,  Dumont, Goubet, and Mammeri showed in~\cite{dumontDecaySolutionsOne2021} that for $p > 7$, solutions to the equation~\eqref{eqn:WNDNLS} with small initial data $u_0 \in L^1_x \cap H^1_x$ exist globally and satisfy the average decay estimate $\E \lVert u(t) \rVert_{L^\infty_x} \lesssim (1+t)^{-1/4}$.  There have also been several works considering numerical schemes for~\eqref{eqn:WNDNLS}~\cite{cohenExponentialIntegratorsNonlinear2017,cuiStochasticSymplecticMultisymplectic2017,laurentMultirevolutionIntegratorsDifferential2020,belaouarNumericalAnalysisNonlinear2015,martyLocalErrorSplitting2021}.

There seem to be few other results concerning equations with randomly modulated dispersion.  The results of Chouk and Gubinelli can be extended to cover KdV type equations~\cite{choukNonlinearPDEsModulated2014}, where it is shown the noise stabilizes these equations. Chen, Goubet, and Mammeri have studied a generalized long-wave equation with white noise dispersion~\cite{chenGeneralizedRegularizedLong2017} and showed that it is globally wellposed in $H^1$, and that localized solutions decay if the exponent of the nonlinearity is large.  Using a similar method, Dinvay proved in~\cite{dinvayStochasticBBMType2022} that solutions to a stochastic BBM equation exist globally in time.  We also mention the result in~\cite{buckmasterSurfaceQuasigeostrophicEquation2018}, where it is shown the the surface quasi-geostrophic equation with randomly modulated diffusion is globally well-posed with high probability for initial data in a suitable Gevrey space.

\subsection{Main results}

We will consider equation~\eqref{eqn:WNDNLS} on the torus $\bbT = \bbR / (2\pi\bbZ)$.  Using Strichartz estimates obtained for the linear propagator, we will prove that for $1 < p \leq 3$ equation~\eqref{eqn:WNDNLS}
is globally well-posed for $L^2$ initial data in the following sense:
\begin{theorem}\label{thm:gwp-theorem}
    Let $u_0 \in L^4_\omega L^2_x$, $1 < p \leq 3$.  Then,~\eqref{eqn:WNDNLS} admits a unique global solution $u \in L^4_\omega L^4_{t,\textup{loc}}L^4_x$.  Moreover, this solution $u$ is almost surely continuous in $L^2$ with conserved $L^2$ norm.
\end{theorem}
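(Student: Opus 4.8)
The plan is to run a contraction-mapping argument for the Duhamel formulation in $L^4_\omega L^4_{[0,T]}L^4_x$ using the Strichartz estimates established above, and then to globalise by iteration using conservation of the $L^2$ norm; the moment hypothesis $u_0\in L^4_\omega L^2_x$ is precisely what makes the local existence time deterministic, so no stopping-time apparatus is needed. The starting point is that the Stratonovich linear equation $i\,du=\Delta u\circ dW_t$ is solved by $S(t):=e^{-iW(t)\Delta}$ — on the Fourier side $\widehat u(k,t):=e^{iW(t)k^2}\widehat u_0(k)$ solves $d\widehat u(k)=ik^2\widehat u(k)\circ dW_t$ — so $S(t)$ is an isometry of every $H^s_x$, in particular of $L^2_x$. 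I would then call $u$ a solution of~\eqref{eqn:WNDNLS} on $[0,T]$ if $u\in L^4_\omega L^4_{[0,T]}L^4_x$ and
\[
    u(t)=S(t)u_0-i\int_0^t S(t)S(s)^{-1}\abs{u(s)}^{p-1}u(s)\,ds=:\Phi(u)(t).
\]

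For the local theory I would estimate $\Phi$ on a ball of $L^4_\omega L^4_{[0,T]}L^4_x$. The homogeneous Strichartz estimate bounds $\lVert S(\cdot)u_0\rVert_{L^4_\omega L^4_{[0,T]}L^4_x}$ by $\lVert u_0\rVert_{L^4_\omega L^2_x}$, with the additional feature — crucial for the endpoint $p=3$ — that this quantity tends to $0$ as $T\to0$ (the $t$-integrand $\E\lVert S(t)u_0\rVert_{L^4_x}^4$ being locally integrable thanks to the off-resonant decay $\E e^{i\mu W(t)}=e^{-\mu^2t/2}$ underlying the estimate). The Duhamel term is controlled by the inhomogeneous (dual) Strichartz estimate applied to the integrand, together with the pointwise bound $\bigl|\abs{a}^{p-1}a-\abs{b}^{p-1}b\bigr|\lesssim(\abs{a}^{p-1}+\abs{b}^{p-1})\abs{a-b}$, the embedding $L^4_x\hookrightarrow L^{4p/3}_x$ on $\bbT$, and Hölder in time and in $\omega$, which give
\[
    \bigl\lVert\,\abs{u}^{p-1}u\,\bigr\rVert_{L^{4/3}_\omega L^{4/3}_{[0,T]}L^{4/3}_x}\lesssim T^{(3-p)/4}\,\lVert u\rVert_{L^4_\omega L^4_{[0,T]}L^4_x}^{p}
\]
and an analogous estimate for differences. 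Hence $\Phi$ is a contraction on a suitable ball once $T$ is small relative to $\lVert u_0\rVert_{L^4_\omega L^2_x}$: for $1<p<3$ the Hölder factor $T^{(3-p)/4}$ supplies the smallness, and for $p=3$ the decay of $\lVert S(\cdot)u_0\rVert_{L^4_\omega L^4_{[0,T]}L^4_x}$ does. This gives a unique fixed point $u\in L^4_\omega L^4_{[0,T]}L^4_x$ on an interval whose length depends only on $\lVert u_0\rVert_{L^4_\omega L^2_x}$. (One may also insert a cutoff $\theta(\lVert u\rVert_{L^4_{[0,s]}L^4_x}/R)$ into the integrand and solve the truncated equation first, as in~\cite{debouardStochasticNonlinearSchrodinger1999,debouardStochasticNonlinearSchrodinger2003,debouardNonlinearSchrodingerEquation2010}; the moment bound then lets one take $R$ deterministic.)

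Next I would prove conservation of the $L^2$ norm at this regularity. Applying $S(t)^{-1}$ to the Duhamel formula, $v(t):=S(t)^{-1}u(t)=u_0-i\int_0^t S(s)^{-1}\abs{u(s)}^{p-1}u(s)\,ds$, and since $S(t)$ is an $L^2_x$-isometry,
\[
    \lVert u(t)\rVert_{L^2_x}^2=\lVert v(t)\rVert_{L^2_x}^2=\lVert u_0\rVert_{L^2_x}^2-2\int_0^t\real\!\bigl(i\,\lVert u(s)\rVert_{L^{p+1}_x}^{p+1}\bigr)\,ds=\lVert u_0\rVert_{L^2_x}^2
\]
for a.e.\ $\omega$, using $\real(ir)=0$ for $r\in\bbR$ and $u(s)\in L^4_x\hookrightarrow L^{p+1}_x$ on $\bbT$; for $1<p\le2$ the differentiation is immediate because then $\abs{u}^{p-1}u\in L^2_x$, and for $2<p\le3$ I would justify it by approximating $u_0$ in $L^4_\omega L^2_x$ by smooth data and invoking the local Lipschitz continuity of the solution map into $C_tL^2_x$ supplied by the contraction. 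Since then $\lVert u(t)\rVert_{L^4_\omega L^2_x}=\lVert u_0\rVert_{L^4_\omega L^2_x}$ for all $t$, the local existence time is the same at every restart, so iterating the local construction on consecutive intervals produces a global solution $u\in L^4_\omega L^4_{t,\mathrm{loc}}L^4_x$. Continuity $u\in C_tL^2_x$ follows because $u(t)\rightharpoonup u(t_0)$ weakly in $L^2_x$ as $t\to t_0$ — $S(t)u_0\to S(t_0)u_0$ in $L^2_x$ a.s.\ by continuity of the Brownian path, while the Duhamel term is continuous in $t$ into $H^{-1}_x$ (as $S(s)^{-1}$ is an isometry of every $H^\sigma_x$ and $\abs{u}^{p-1}u\in L^{4/3}_x\hookrightarrow H^{-1}_x$) — and weak convergence together with the constancy of $\lVert u(t)\rVert_{L^2_x}$ upgrades to norm convergence in the Hilbert space $L^2_x$. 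Uniqueness in $L^4_\omega L^4_{t,\mathrm{loc}}L^4_x$ follows from the contraction estimate and the a priori $L^2$ bound, applied interval by interval.

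I expect the main obstacle to be the local contraction at the endpoint $p=3$: there Hölder in time gives no gain, so closing the fixed point requires the sharp form of the $L^4$ Strichartz estimate — in particular that $\lVert S(\cdot)u_0\rVert_{L^4_\omega L^4_{[0,T]}L^4_x}$ decays as $T\to0$ — phrased throughout with $L^4_\omega$ norms so that the existence time is genuinely deterministic. A secondary technical point is making the $L^2$ conservation law and the $C_tL^2_x$ continuity rigorous at the low regularity $L^4_{t,x}$ when $2<p\le3$, where $\abs{u}^{p-1}u$ no longer lies in $L^2_x$.
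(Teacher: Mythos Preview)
There is a genuine gap in the $\omega$-integrability of your contraction. Your Duhamel step implicitly assumes an inhomogeneous Strichartz estimate of the form
\[
\Bigl\lVert \int_0^t S(t)S(s)^{-1}f(s)\,ds \Bigr\rVert_{L^4_{\omega,T,x}} \lesssim \lVert f\rVert_{L^{4/3}_\omega L^{4/3}_{T,x}},
\]
i.e.\ with the dual exponent $4/3$ in the probability variable. No such estimate is available: as the paper stresses, the adaptedness constraint blocks the duality argument, and the inhomogeneous estimate one can actually prove (\Cref{thm:strichartz-est-inhomog}) keeps the exponent $4$ in $\omega$ on both sides, $\lVert\cdot\rVert_{L^4_{\omega,T,x}}\lesssim C(T)\lVert f\rVert_{L^4_\omega L^{4/3}_{T,x}}$. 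With the correct source norm, at $p=3$ one needs
\[
\bigl\lVert \abs{u}^{2}u \bigr\rVert_{L^4_\omega L^{4/3}_{T,x}} = \bigl\lVert\,\lVert u\rVert_{L^4_{T,x}}^{3}\,\bigr\rVert_{L^4_\omega} = \lVert u\rVert_{L^{12}_\omega L^4_{T,x}}^{3},
\]
which the ball in $L^4_{\omega,T,x}$ does not control. The direct fixed point therefore does not close, and the assertion that ``no stopping-time apparatus is needed'' is not correct; the bound you wrote for $\lVert\abs{u}^{p-1}u\rVert_{L^{4/3}_\omega L^{4/3}_{T,x}}$ is true but useless here.

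The truncation you mention parenthetically is not an optional variant but the core device. The paper runs the contraction on the truncated equation~\eqref{eqn:trunc-WNDNLS} with $\Theta[u](t)=\theta(\lVert\bbOne_{s<t}u\rVert_{L^4_{s,x}})$: on the support of $\Theta$ one has $\lVert u\rVert_{L^4_{T,x}}\le 2$ almost surely, which makes the nonlinear estimate effectively linear in $u$ and hence closable in $L^4_\omega$. One then proves, via the Strichartz bounds and Markov's inequality applied to $\E\lVert\utrunc\bbOne_{t\le\tau}\rVert_{L^4_{T,x}}^4$, that the stopping time $\tau$ at which the cutoff activates satisfies $\Pr(\tau<T)\le\tfrac12$ for $T$ small depending only on $\lVert u_0\rVert_{L^4_\omega L^2_x}$. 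Since $\utrunc$ solves~\eqref{eqn:WNDNLS} on $[0,\tau]$ and conserves mass, iterating this step globalises the solution. Your remarks on $L^2$ conservation and $C_tL^2_x$ continuity are fine, but they sit on top of this truncation/stopping-time machinery rather than replacing it.
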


Solutions to the cubic equation were previously obtained by Chouk and Gubinelli in~\cite{choukNonlinearPDEsModulated2015}.  We will show that these solutions agree with the ones we construct:
\begin{theorem}\label{thm:rough-path-comparison}
    The $L^2$ solutions to the cubic NLS equation with white noise modulation constructed in~\cite{choukNonlinearPDEsModulated2015} coincide with those constructed in~\Cref{thm:gwp-theorem}.
\end{theorem}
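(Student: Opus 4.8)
The plan is to reduce \Cref{thm:rough-path-comparison} to the uniqueness statement already contained in \Cref{thm:gwp-theorem}. Concretely, I want to show that for $p = 3$ the solution produced by the rough-path construction of \cite{choukNonlinearPDEsModulated2015} belongs to $L^4_\omega L^4_{t,\mathrm{loc}} L^4_x$ and solves the same random Duhamel equation
\[
    u(t) = U(t,0)u_0 - i\int_0^t U(t,s)\,\abs{u(s)}^{p-1}u(s)\,ds,
    \qquad U(t,s) := e^{-i(W_t - W_s)\Delta},
\]
as the solution of \Cref{thm:gwp-theorem}; here $U(t,s)$ is the propagator of the linear Stratonovich flow $i\,du = \Delta u \circ dW_t$, a unitary Fourier multiplier on $L^2_x$, and for the solution of \Cref{thm:gwp-theorem} this identity holds by construction after removing the truncation, on each subinterval where it is inactive, and patching via $L^2_x$ conservation. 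Once both solutions are seen to lie in $L^4_\omega L^4_{t,\mathrm{loc}} L^4_x$, to agree at $t = 0$, and to solve this equation, the uniqueness part of \Cref{thm:gwp-theorem} forces them to coincide almost surely.

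To realise the Chouk--Gubinelli solution in this framework I would use its defining approximation: it is the limit, in a function space adapted to the modulation, of the classical solutions $u^{(n)}$ of \eqref{eqn:smooth-mod-NLS} with smooth modulations $m = \dot W^{(n)}$, for a regularization $W^{(n)} \to W$ of the Brownian path (a dyadic piecewise-linear interpolation, say). Each $u^{(n)}$ satisfies the smooth Duhamel identity with propagator $U^{(n)}(t,s) = e^{-i(W^{(n)}_t - W^{(n)}_s)\Delta}$. I would then show that $(u^{(n)})_n$ is bounded, and in fact Cauchy, in $L^4_\omega L^4_{t,\mathrm{loc}}L^4_x$: boundedness comes from the Strichartz estimates of the paper applied to this identity together with conservation of the $L^2_x$ norm, and the Cauchy property from the corresponding difference estimate combined with the strong convergence of $U^{(n)}(t,s)$ to $U(t,s)$ on $L^2_x$, uniformly on compact time sets, almost surely. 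The resulting $L^4_{t,x}$-limit solves the displayed Duhamel equation, lies in $C_t L^2_x$ with conserved norm, and, since convergence in the Chouk--Gubinelli topology also identifies the limit as a space-time distribution, it must equal the rough-path solution. An alternative to the approximation argument is to identify the rough-path solution directly inside the stochastic Fourier restriction spaces $X^{s,b}$ developed earlier in the paper --- the paracontrolled/modulated bounds of \cite{choukNonlinearPDEsModulated2015} being essentially an $X^{0,b}$-type estimate for the modulated propagator, which the Strichartz inequality then embeds into $L^4_{t,x}$ --- and to compare there; I would keep this as a backup in case the uniform $L^4_\omega$ control along the regularization proves awkward.

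I expect the main obstacle to be exactly this reconciliation of the two function-space frameworks: matching the modulation-adapted spaces of \cite{choukNonlinearPDEsModulated2015}, whose key estimates exploit the time-irregularity of $W$, with the stochastic $X^{s,b}$ and Strichartz spaces of the present paper, and in particular upgrading the almost-sure bounds of the rough-path construction to genuine $L^4_\omega$-integrability, which requires controlling the dependence on $\omega$ uniformly along the approximating sequence $W^{(n)}$. The passage to the limit in the nonlinear term --- one needs $\abs{u^{(n)}}^{2}u^{(n)}$ to converge in a dual Strichartz space --- and the fact that the propagators $U^{(n)}$ converge to $U$ only strongly, not in operator norm on $L^2_x$, are the technical points that the difference estimate must absorb, presumably by a density argument exploiting the uniform $L^4_{t,x}$ bounds.
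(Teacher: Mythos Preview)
The paper takes exactly your ``backup'' route, not the approximation argument. Concretely: the Chouk--Gubinelli solution $v$ satisfies $\psi = e^{iW_t\Delta}v \in C^{1/2}_t L^2_x$ almost surely, and the paper upgrades this to $\psi \in L^4_\omega C^{1/2}_T L^2_x$ by tracking the dependence of the $C^{1/2}_t L^2_x$ norm on the path --- it is bounded by a power of the irregularity constant $\lVert \Phi^W \rVert_{\mathcal{W}_T^{\rho,\gamma}}$ of~\cite{choukNonlinearPDEsModulated2015}, and this constant has exponential square-integrable moments for Brownian motion by~\cite{catellierAveragingIrregularCurves2016}. Since $C^{1/2}_t L^2_x \hookrightarrow H^{1/2-}_t L^2_x$, this gives $v \in X^{0,1/2-}_4(T)$, and \Cref{thm:X-sb-Strichartz} then places $v$ in $L^4_{\omega,T,x}$, where the uniqueness of \Cref{thm:gwp-theorem} applies. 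So the obstacle you identified --- getting genuine $L^4_\omega$ integrability from the pathwise bounds --- is resolved by a moment estimate on the irregularity constant, not by any limiting procedure.

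Your primary approach via smooth approximations $W^{(n)}$ has a real gap: the Strichartz estimates of \Cref{thm:strichartz-est-homog,thm:strichartz-est-inhomog} rely on the identity $\E e^{iaW_t} = e^{-a^2 t/2}$ to damp nonresonant interactions, and this is specific to the Brownian propagator. For a piecewise-linear $W^{(n)}$ the propagator is, on each piece, a time-rescaled \emph{deterministic} periodic Schr\"odinger flow, for which no uniform-in-$n$ version of these estimates is available; so ``boundedness comes from the Strichartz estimates of the paper'' does not go through. The difference/Cauchy estimate would face the same problem. This is why the paper bypasses approximation entirely and works directly with the $X^{0,b}_4$ embedding.
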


We also study the equation with (deterministic) $L^2$ critical exponent $p=5$.  In the deterministic case, it was shown by Kishimoto in~\cite{kishimotoRemarkPeriodicMass2014} that the periodic NLS with quintic nonlinearity is illposed in $L^2$ the sense that the data to solution map is not smooth in $C_tL^2_x$, and we show that this illposedness extends to the stochastic equation:

\begin{theorem}\label{thm:quintic-illposedness}
    Suppose the data to solution map $S$ sending $u_0$ to the solution $u$ of~\eqref{eqn:WNDNLS} exists for $p=5$, and that it is continuous as an operator from a neighborhood of $0$ in $L^2$ into $L^1_\omega C^0_T L^2_x$ for $T$ sufficiently small.  Then, $S$ is not $C^5$.
\end{theorem}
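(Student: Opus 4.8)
The plan is to run a Picard-iteration obstruction in the spirit of Kishimoto's treatment of the deterministic periodic quintic equation~\cite{kishimotoRemarkPeriodicMass2014}. First, assume for contradiction that $S$ is $C^5$ on a neighborhood of $0$. Since $u=S(u_0)$ solves \eqref{eqn:WNDNLS}, it satisfies the Duhamel formula $u(t)=U(t)u_0-i\int_0^t U(t)U(s)^{-1}\bigl(\abs{u(s)}^4u(s)\bigr)\,ds$, where $U(t):=e^{-i\Delta W_t}$ is the white-noise propagator (acting on Fourier modes by $\widehat{U(t)f}(k)=e^{ik^2W_t}\hat f(k)$); since moreover $S(0)=0$ while $v\mapsto\abs{v}^4v$ has vanishing derivatives of order below five at the origin, differentiating the Duhamel identity five times at $u_0=0$ shows that $D^5S(0)$ is, up to the nonzero constant $5!$, the degree-five homogeneous map
\[
\Phi(u_0)(t)\ :=\ -i\int_0^t U(t)U(s)^{-1}\bigl(\lrabs{U(s)u_0}^4\,U(s)u_0\bigr)\,ds.
\]
Consequently, if $S$ is $C^5$ then $\lVert\Phi(u_0)\rVert_{L^1_\omega C^0_T L^2_x}\lesssim\lVert u_0\rVert_{L^2_x}^5$ for every trigonometric polynomial $u_0$, and the goal is to contradict this bound.

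Next I would pass to the Fourier side, where
\[
\widehat{\Phi(u_0)(t)}(k)\ =\ -i\,e^{ik^2W_t}\sum_{k_1+k_2+k_3-k_4-k_5=k}\hat u_0(k_1)\hat u_0(k_2)\hat u_0(k_3)\,\overline{\hat u_0(k_4)}\,\overline{\hat u_0(k_5)}\int_0^t e^{i(\Omega-k^2)W_s}\,ds,
\]
with $\Omega:=k_1^2+k_2^2+k_3^2-k_4^2-k_5^2\in\bbZ$, and split this sum into its resonant ($\Omega=k^2$) and non-resonant ($\Omega\neq k^2$) parts. On the resonant part the inner integral equals $t$ and $\abs{e^{ik^2W_t}}=1$, so that piece has a \emph{deterministic} $L^2_x$ norm whose size is governed by exactly the lattice-point sums that appear in~\cite{kishimotoRemarkPeriodicMass2014}. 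The non-resonant part is a genuinely random oscillatory stochastic integral, for which the substitute for the deterministic gain $\abs{\Omega-k^2}^{-1}$ is the second-moment estimate
\[
\E\biggl\lvert\int_0^T e^{imW_s}\,ds\biggr\rvert^2\ =\ \int_0^T\!\!\int_0^T e^{-\frac12 m^2\abs{s-s'}}\,ds\,ds'\ \lesssim\ T\min\bigl(T,\,m^{-2}\bigr),\qquad m\in\bbZ\setminus\{0\}.
\]

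Then I would take the family of trigonometric polynomials $u_0^N$ with $\hat u_0^N$ supported on $\sim N$ high frequencies used by Kishimoto~\cite{kishimotoRemarkPeriodicMass2014}, normalized so that $\lVert u_0^N\rVert_{L^2_x}\sim1$, and show that $\lVert\Phi(u_0^N)(t)\rVert_{L^1_\omega L^2_x}\gtrsim c(N)$ for some $t\le T$ with $c(N)\to\infty$; since $\lVert\Phi(u_0^N)\rVert_{L^1_\omega C^0_T L^2_x}\ge\E\lVert\Phi(u_0^N)(t)\rVert_{L^2_x}$, this contradicts the bound above and proves the theorem. The divergence here originates in the combinatorial core of~\cite{kishimotoRemarkPeriodicMass2014}, namely the necessity of an (at least logarithmic) loss in the $L^6_{t,x}$ Strichartz inequality on $\bbT$ at the $L^2$-critical regularity, equivalently a Vinogradov-type lower bound on the number of admissible frequency interactions. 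To carry this into the stochastic setting I would bound the deterministic resonant contribution from below just as in the deterministic problem, and control the non-resonant remainder in $L^1_\omega L^2_x$ by using Minkowski's inequality to pull the $L^1_\omega$-norm inside the $\ell^2_k$-sum over output frequencies, the second-moment estimate above, a dyadic decomposition in $\abs{\Omega-k^2}$, and a count of the number of $(k_1,\dots,k_5)$ with prescribed output $k$ and prescribed value of $\Omega-k^2$, so as to show that the random remainder cannot cancel the main term. The main obstacle is precisely this last step: in the deterministic case the pointwise gain $\abs{\Omega-k^2}^{-1}$ makes the near-resonant contribution dominate and produces the logarithmic divergence, but after replacing $e^{i(\Omega-k^2)s}$ by $e^{i(\Omega-k^2)W_s}$ that gain survives only in a mean-square sense, so one must both verify that the white-noise averaging does not wash out the Kishimoto divergence and extract a genuine \emph{lower} bound on $\lVert\Phi(u_0^N)(t)\rVert_{L^1_\omega L^2_x}$ in the weak $L^1_\omega$ topology — which requires exploiting the arithmetic structure of the resonant interactions rather than merely counting their number.
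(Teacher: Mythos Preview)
Your overall framework --- identifying $D^5S(0)$ with the first nontrivial Picard iterate, passing to the Fourier side, and using Kishimoto's test data --- matches the paper's. The difference is entirely in how the non-resonant remainder is handled, and here the paper takes a route that sidesteps your ``main obstacle'' completely.

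Rather than split $\Phi$ and bound the non-resonant piece from above, the paper lower-bounds the smaller quantity $\lVert\E\,U(t)^{-1}\Phi(\phi_N)(t)\rVert_{L^2_x}$; this suffices because $U(t)^{-1}=e^{iW_t\Delta}$ is unitary and $\lVert\E f\rVert_{L^2_x}\le\E\lVert f\rVert_{L^2_x}$ by Minkowski. With the Dirichlet-type data $\phi_N=N^{-1/2}\sum_{\abs{k}\le N}e^{ikx}$, whose Fourier coefficients are real and nonnegative, the $k$th Fourier coefficient of $\E\,U(t)^{-1}\Phi(\phi_N)(t)$ is, up to a unimodular constant,
\[
N^{-5/2}\sum_{\kappa}\int_0^t \E\,e^{i\Omega(k,\kappa)W_s}\,ds\ =\ N^{-5/2}\sum_{\kappa}\int_0^t e^{-\Omega(k,\kappa)^2 s/2}\,ds,
\]
and every summand here is \emph{nonnegative}. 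One therefore simply discards all non-resonant terms ($\Omega\neq0$) to obtain the lower bound $tN^{-5/2}\abs{S_N(k)}$, and Kishimoto's count $\abs{S_N(k)}\gtrsim N^2\log N$ for $\abs{k}\le N/4$ finishes the argument. There is no remainder to control: after moving the expectation inside, the non-resonant contribution has the same sign as the resonant one and can only help.

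Your plan --- dyadic decomposition in $\abs{\Omega-k^2}$, second-moment bounds on $\int_0^t e^{imW_s}\,ds$, level-set counts for $\Omega$ --- could probably be pushed through, but it requires verifying that the near-resonant contributions (where the $m^{-2}$ gain is weakest) do not swamp the resonant term, which is precisely the arithmetic subtlety you flag at the end and leave open. The paper's positivity trick, resting on the identity $\E\,e^{imW_s}=e^{-m^2s/2}>0$ together with the nonnegativity of $\hat\phi_N$, makes all of that unnecessary.
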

\Cref{thm:quintic-illposedness} implies that we cannot use a fixed point argument in any space embedded in $L^1_\omega C^0_T L^2_x$ to construct solutions to the quintic equation.  However, it does not preclude the possibility of using a fixed point argument to construct solutions in some space $X \not\subset L^1_\omega C^0_T L^2_x$ and then showing \textit{a posteriori} that these solutions also lie in $L^1_\omega C^0_T L^2_x$.  In particular, it does not rule out an argument like we use to prove~\Cref{thm:gwp-theorem} that constructs a (local) solution in an auxiliary space, and then shows that the constructed solution is continuous in $L^2$.  However, the obvious generalization of the Strichartz estimate used for~\Cref{thm:gwp-theorem} is too weak to be of help here (even for nonlinearities $3 < p < 5$).

\subsection{Plan of the proof and obstacles}

The proof of~\Cref{thm:gwp-theorem} depends on Strichartz estimates for the linear equation
\begin{equation*}
    \left\{\begin{array}{l}
        idu = \Delta u \circ dW_t + F(t)\;dt\\
        u(t=0) = u_0\\
    \end{array}\right.
\end{equation*}
In particular, we will prove that the solution $u$ to the linear equation satisfies
\begin{equation}\label{eqn:L4-Strichartz-combined}
    \left\lVert u \right\rVert_{L^4(\Omega \times [0,t] \times \bbT)} \leq C(T) \left(\lVert u_0 \rVert_{L^4(\Omega; L^2(\bbT))} + \lVert F \rVert_{L^4(\Omega; L^{4/3}([0,T] \times \bbT))}\right)
\end{equation}
provided that $u_0$ and $F$ are adapted to the Brownian filtration.  This is similar to the $L^4$ estimates introduced by Bourgain in~\cite{bourgainFourierTransformRestriction1993a} for deterministic Schr\"odinger equations on the torus.  To obtain the homogeneous estimate, we directly expand the integral for $\lVert u \rVert_{L^4_{\omega,T,x}}^4$ in terms of the Fourier coefficients of $u_0$.  The resonant interactions can immediately be bounded by $\lVert u_0 \rVert_{L^2}^4$, and we use the identity $\E e^{iaW_t} = e^{-\frac{a^2}{2} t}$ to control the non-resonant interactions.  The inhomogeneous estimate is more difficult: because of the requirement that $u_0$ be adapted to the filtration at time $0$, we cannot use duality to get useful estimates from the homogeneous estimate.  Instead, we argue directly by expanding the expression for the Duhamel integral.  By combining these estimates with a truncation argument similar to the ones used in~\cite{debouardStochasticNonlinearSchrodinger1999,debouardNonlinearSchrodingerEquation2010}, we can show that~\eqref{eqn:WNDNLS} is globally wellposed for $1 < p \leq 3$.

Next, we show that our solutions agree with the rough path solutions constructed in~\cite{choukNonlinearPDEsModulated2015}.  To do so, we take advantage of the higher regularity in time for the rough path solutions to place them in an $X^{s,b}$-type space, which we show embeds in $L^4_{\omega,T,x}$ using Strichartz estimates.

Finally, we consider the equation with a quintic nonlinearity.  Recall that in the deterministic setting, the $L^6$ Strichartz estimates on the torus lose regularity~\cite{bourgainFourierTransformRestriction1993a}, and so are unsuited to prove wellposedness in $L^2$.  In fact, the deterministic quintic equation is known to be analytically illposed in $L^2$~\cite{kishimotoRemarkPeriodicMass2014}, and the question of whether weaker wellposedness results hold remains an open problem.  Both the loss of regularity for the Strichartz estimates and the analytic illposedness depend, in some sense, on the fact that the number of resonant interactions grows too large at high frequencies.  Since the equation with white noise dispersion has the same resonance structure, we are able to prove parallel results.

The plan of the paper is as follows: In~\cref{sec:notation} we present notation and basic results we will use throughout the paper. In~\cref{sec:strichartz-estimates} we prove $L^4$ based Strichartz estimates which will be necessary for establishing wellposedness, and introduce the spaces $X^{s,b}_4$ which we will use to prove~\Cref{thm:rough-path-comparison}.  We will also prove an $L^6$ Strichartz estimate with a loss of regularity. In~\Cref{sec:gwp}, we show that the equation is globally well-posed for $1 < p \leq 3$, and that our solutions to the cubic equation agree with those constructed in~\cite{choukNonlinearPDEsModulated2015}. Finally, in~\Cref{sec:quintic-ill-posedness}, we prove that the quintic equation is illposed in $L^1_\omega C_T L^2_x$.

\section{\label{sec:notation} Notation and Preliminaries}

We will use $\bbT = \bbR / (2\pi \bbZ)$ to denote the torus of size $2\pi$.  The Fourier transform $\mathcal{F} : L^2(\bbT) \to \ell^2(\bbZ)$ is given by 
\begin{equation*}
    \mathcal{F}u(k) = \hat{u}(k) = \frac{1}{2\pi} \int_{\bbT} f(x) e^{-ikx}\;dx
\end{equation*} 
The inverse Fourier transform is then 
\begin{equation*}
    \mathcal{F}^{-1} a_k = \check{a_k}(x) = \sum_{k \in \bbZ} a_k e^{ikx}
\end{equation*}
The Littlewood-Paley projectors $P_N f$ and $P_{\leq N} f$ are Fourier multipliers defined by their Fourier transforms
\begin{align*}
    \widehat{P}_N(k) =& \begin{cases}
        \chi(k) & N = 1\\
        \chi\left(\frac{k}{N}\right) - \chi\left(\frac{2k}{N}\right) & N > 1
    \end{cases}\\
    \widehat{P}_{\leq N}(k) =& \chi\left(\frac{k}{N}\right)
\end{align*}
where $\chi:\bbR \to [0,1]$ is a bump function supported in $B_2(0)$ with $\chi \equiv 1$ in $B_1(0)$.

The Strichartz estimates we introduce will require us to work with Lebesgue spaces in probability, time, and space.  We will write $L^\rho_\omega L^r_x$ for $L^\rho(\Omega; L^r(\bbT_L))$ and $L^\rho_\omega L^q_T L^r_x$ for $L^\rho(\Omega; L^q_\mathcal{P}(0,T; L^r(\bbT_L)))$, where $L^p_\mathcal{P}$ is the space of predictable $L^p$ processes. Similarly, we will write $L^\rho_\omega L^q_{T,x}$ if $q = r$ and $L^p_{\omega,T,X}$ if $\rho=q=r$.

To define and work with the $X^{s,b}_4(T)$ spaces in~\Cref{sec:strichartz-estimates}, we will need to use Sobolev spaces.  The $L^2$ based Sobolev spaces are defined for $0 < s < 1$ using the (equivalent) norms
\begin{equation*}
    \lVert f \rVert_{H^s(I;B)} = \lVert \jBra{D}^s f \rVert_{L^2(I;B)} \sim \lVert f \rVert_{L^2(I;B)} + \int_{I \times I} \frac{\lVert f(x) - f(y)\rVert_B}{\abs{x - y}^{\frac{1}{2} + s}}\;dxdy
\end{equation*}
where $B$ is some Banach space and $I$ is an interval.  These spaces satisfy the Sobolev inequality
\begin{equation*}
    \lVert f \rVert_{L^p} \lesssim_{s,p} \lVert f \rVert_{H^s}
\end{equation*}
for all $p$ with $\frac{1}{2} - s \leq \frac{1}{p} \leq \frac{1}{2}$.  At certain points in our argument, we will also need to work with the Besov spaces $B^{s}_{p,q}(I)$.  For $s \in (0,1)$, these spaces can be defined using the norm
\begin{equation*}
    \lVert f \rVert_{B^{s}_{p,q}(\Omega;B)} := \lVert f \rVert_{L^p(I; B)} + \left\lVert  \frac{\lVert \Delta_h f(x) \rVert_{L^p_x(I \cap (I-h);B)}}{h^{1+s}} \right\rVert_{L^q}
\end{equation*}
where $\Delta_h$ is the finite difference operator $\Delta_h f(x) = f(x) - f(x-h)$.
By examining the definition, we see that $B^s_{2,2} = H^s$.  Thus, by the standard embedding results for Besov spaces (see~\cite{bahouriFourierAnalysisNonlinear2011}), we have
\begin{equation*}
    \lVert f \rVert_{B^{s_2}_{p,q}} \leq \lVert f \rVert_{H^{s_1}}
\end{equation*}
whenever $p \geq 2$ and either $s_2 - \frac{1}{p} \leq s_1 - \frac{1}{2}$, $q\geq 2$, or $s_2 - \frac{1}{p} < s_1 - \frac{1}{2}$, $q \in [1,\infty]$.

Finally, we record the following convolution estimate for later use:
\begin{lemma}\label{thm:convo-lemma}
    For $f_n, g_n, h_n \in \ell^2(\bbZ)$ and $r_{n,m} \in \ell^2(\bbZ^2)$, we have the estimate
    \begin{equation*}
        \left\lVert \sum_{n,\ell} f_{k-n} g_{k-n-\ell} h_{k-\ell} r_{n,\ell} \right\rVert_{\ell^2_k} \leq \lVert f_n \rVert_{\ell^2} \lVert g_n \rVert_{\ell^2} \lVert h_n \rVert_{\ell^2} \lVert r_{n,m} \rVert_{\ell^2}
    \end{equation*}
\end{lemma}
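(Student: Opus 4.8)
The plan is to deduce the estimate from a single application of the Cauchy--Schwarz inequality in the pair of summation indices $(n,\ell)$, followed by a change of variables on $\bbZ^3$ that completely decouples the resulting sum.

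First I would fix $k \in \bbZ$ and regard the inner sum $\sum_{n,\ell} f_{k-n}\,g_{k-n-\ell}\,h_{k-\ell}\,r_{n,\ell}$ as the $\ell^2(\bbZ^2)$ inner product of the sequence $(n,\ell) \mapsto f_{k-n}\,g_{k-n-\ell}\,h_{k-\ell}$ against $(r_{n,\ell})$. Cauchy--Schwarz then gives
\[
    \lrabs{ \sum_{n,\ell} f_{k-n}\,g_{k-n-\ell}\,h_{k-\ell}\,r_{n,\ell} }^2 \leq \left( \sum_{n,\ell} \abs{f_{k-n}}^2 \abs{g_{k-n-\ell}}^2 \abs{h_{k-\ell}}^2 \right) \lVert r_{n,m} \rVert_{\ell^2(\bbZ^2)}^2 .
\]
The same bound applied before squaring also shows the inner sum converges absolutely: summing $\sum_{n,\ell}\abs{f_{k-n}}^2\abs{g_{k-n-\ell}}^2\abs{h_{k-\ell}}^2$ first in $n$ and then in $\ell$ yields at most $\lVert f_n\rVert_{\ell^2}^2 \lVert g_n\rVert_{\ell^2}^2 \lVert h_n\rVert_{\ell^2}^2 < \infty$, so the manipulations below are legitimate.

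Next I would sum the displayed inequality over $k \in \bbZ$. Every term is nonnegative, so Tonelli's theorem lets me freely interchange the order of summation, and the key point is that the map $(k,n,\ell) \mapsto (k-n,\ k-n-\ell,\ k-\ell)$ is a bijection of $\bbZ^3$ onto itself, with inverse $(a,b,c) \mapsto (a+c-b,\ c-b,\ a-b)$. Performing this substitution,
\[
    \sum_{k \in \bbZ} \sum_{n,\ell} \abs{f_{k-n}}^2 \abs{g_{k-n-\ell}}^2 \abs{h_{k-\ell}}^2 = \sum_{a,b,c} \abs{f_a}^2 \abs{g_b}^2 \abs{h_c}^2 = \lVert f_n \rVert_{\ell^2}^2 \, \lVert g_n \rVert_{\ell^2}^2 \, \lVert h_n \rVert_{\ell^2}^2 .
\]
Combining the two displays bounds $\bigl\lVert \sum_{n,\ell} f_{k-n} g_{k-n-\ell} h_{k-\ell} r_{n,\ell} \bigr\rVert_{\ell^2_k}^2$ by $\lVert f_n \rVert_{\ell^2}^2 \lVert g_n \rVert_{\ell^2}^2 \lVert h_n \rVert_{\ell^2}^2 \lVert r_{n,m} \rVert_{\ell^2}^2$, and taking square roots finishes the proof.

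There is no real obstacle here: the statement is Cauchy--Schwarz together with the bookkeeping of a change of variables, and the only thing requiring (minor) care is checking that the indicated map on $\bbZ^3$ is invertible, so that the triple sum genuinely factors into a product of three $\ell^2$ norms. One could instead phrase the argument in physical space, writing $f,g,h$ as Fourier series on $\bbT$ and $r$ as a Fourier series on $\bbT^2$ and recognizing the inner sum as a Fourier coefficient of a suitable product, but the direct computation above is the cleanest route.
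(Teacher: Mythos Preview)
Your proof is correct. Both you and the paper reduce the lemma to a single Cauchy--Schwarz step followed by a change of variables that factorizes the resulting triple sum, so the arguments are close in spirit. The one genuine difference is in how the factors are grouped: the paper argues by duality, pairing the expression with a test sequence $\phi_k$ and then applying Cauchy--Schwarz in all three variables $(k,n,\ell)$ with the split $(r_{n,\ell}\,g_{k-n-\ell})$ versus $(\phi_k\,f_{k-n}\,h_{k-\ell})$; you instead apply Cauchy--Schwarz in $(n,\ell)$ at each fixed $k$, isolating $r$ from the product $f\,g\,h$, and only afterwards sum over $k$. Your route is marginally more direct in that it avoids introducing a dual test sequence, while the paper's grouping makes the factorization of the two halves slightly more transparent without needing to check that $(k,n,\ell)\mapsto(k-n,k-n-\ell,k-\ell)$ is a bijection of $\bbZ^3$. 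Either way the content is the same.
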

\begin{proof}
    Let $\phi_n \in \ell^2(\bbZ)$.  Then, by Cauchy-Schwarz,
    \begin{equation*}\begin{split}
        \lrabs{\biggl\langle \phi_k, \sum_{n,\ell} f_{k-n} g_{k-n-\ell} h_{k-\ell} r_{n,\ell} \biggr\rangle} =\hspace{-3.75em}&\hspace{3.75em} \lrabs{ \sum_{k,n,\ell} \phi_k f_{k-n} g_{k-n-\ell} h_{k-\ell} r_{n,\ell} }\\
        \leq& \left(\sum_{k,n,\ell} \abs{ r_{n,\ell}\rvert^2 \lvert g_{k-n-\ell}}^2 \right)^{\frac{1}{2}} \left(\sum_{k,n,\ell} \abs{\phi_k}^2 \abs{ f_{k-n}}^2 \abs{h_{k-\ell}}^2\right)^{\frac{1}{2}}\\
        \leq& \lVert f_n \rVert_{\ell^2} \lVert g_n \rVert_{\ell^2} \lVert h_n \rVert_{\ell^2} \lVert r_{n,m} \rVert_{\ell^2} \lVert \phi_n \rVert_{\ell^2}
    \end{split}
    \end{equation*}
    which gives the desired result by duality.
\end{proof}

\section{Strichartz estimates\label{sec:strichartz-estimates}}

In this section, we will derive Strichartz estimates for the linear propagator of
\begin{equation}\label{eqn:linear-var-dispersion}
    \left\{\begin{array}{l}
    idu = \Delta u \; \circ dW_t\\
    u(t=s) = u_s
    \end{array}\right.
\end{equation}
which is given by $u(t) = e^{-i(W_t - W_s)\Delta} u_s$.  This operator can be represented in Fourier space as
\begin{equation}\label{eqn:prop-fourier-def}
    \widehat{e^{-i(W_t - W_s) \Delta}u_s} = e^{ik^2(W_t - W_s)} \hat{u}_s(k)
\end{equation}
see~\cite{martySplittingSchemeNonlinear2006,debouardNonlinearSchrodingerEquation2010}.  Since Brownian motion has stationary increments, it suffices to consider the case $s = 0$.  We begin with the homogeneous estimate.
\begin{theorem}\label{thm:strichartz-est-homog}
    For $u_0 \in L^4(\Omega; L^2(\bbT))$ independent of $\{W_t\}_{t > 0}$, we have that for any $\alpha \in \left(\frac{1}{2}, 2\right]$,
    \begin{equation*}
        \lVert e^{-iW_t \Delta} u_0 \rVert_{L^4_{\omega,T,x}} \lesssim_\alpha \left( T + T^{1 - \frac{\alpha}{2}} \right)^{1/4} \lVert u_0 \rVert_{L^4_\omega L^2_x}
    \end{equation*}
\end{theorem}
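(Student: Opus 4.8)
The plan is to compute $\lVert e^{-iW_t\Delta}u_0\rVert_{L^4_{\omega,T,x}}^4=\E\int_0^T\int_{\bbT}\abs{u(t,x)}^4\,dx\,dt$ directly on the Fourier side. Writing $u(t,x)=\sum_k e^{ik^2W_t}\hat u_0(k)e^{ikx}$ via~\eqref{eqn:prop-fourier-def} and expanding $\abs{u}^4=u\bar u u\bar u$, the $x$-integral forces $k_1-k_2+k_3-k_4=0$; since $u_0$ is independent of $\{W_t\}_{t>0}$ the expectation of the coefficient product decouples from that of the oscillatory factor, and the identity $\E e^{iaW_t}=e^{-a^2t/2}$ leaves $2\pi$ times a sum, over frequencies with $k_1-k_2+k_3-k_4=0$, of
\begin{equation*}
    \E\!\left[\hat u_0(k_1)\overline{\hat u_0(k_2)}\hat u_0(k_3)\overline{\hat u_0(k_4)}\right]\int_0^T e^{-\Phi^2 t/2}\,dt ,
\end{equation*}
where $\Phi:=k_1^2-k_2^2+k_3^2-k_4^2$; the time integral equals $T$ when $\Phi=0$ and $\tfrac{2}{\Phi^2}\bigl(1-e^{-\Phi^2T/2}\bigr)$ when $\Phi\neq 0$. (The interchange of series, time integral, and expectation will be justified by first taking $u_0$ with finitely many Fourier modes and then passing to the limit, using the bounds below.)

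Two inputs drive the rest: an algebraic factorization and the convolution estimate~\Cref{thm:convo-lemma}. On the constraint set one computes $\Phi=-2(k_2-k_1)(k_2-k_3)$, so $\Phi=0$ exactly when $\{k_1,k_3\}=\{k_2,k_4\}$ as multisets. For these \emph{resonant} frequencies the coefficient product equals $\abs{\hat u_0(k_1)}^2\abs{\hat u_0(k_3)}^2$, there are at most two admissible $(k_2,k_4)$ per $(k_1,k_3)$, and the time factor is $T$; hence the resonant part is bounded by
\begin{equation*}
    C\,T\sum_{k_1,k_3}\E\!\left[\abs{\hat u_0(k_1)}^2\abs{\hat u_0(k_3)}^2\right] = C\,T\,\E\!\left[\Bigl(\sum_k\abs{\hat u_0(k)}^2\Bigr)^2\right]\lesssim T\,\lVert u_0\rVert_{L^4_\omega L^2_x}^4 .
\end{equation*}
For $\Phi\neq 0$, writing $b_k:=\abs{\hat u_0(k)}$ and combining $1-e^{-x}\le\min(1,x)$ with $\min(a,b)\le a^\theta b^{1-\theta}$ gives, for every $\alpha\in(0,2]$,
\begin{equation*}
    \frac{2}{\Phi^2}\bigl(1-e^{-\Phi^2T/2}\bigr)\le\min\!\left(T,\tfrac{2}{\Phi^2}\right)\le 2^{\alpha/2}\abs{\Phi}^{-\alpha}T^{1-\alpha/2},
\end{equation*}
and on the non-resonant set $\abs{\Phi}^{-\alpha}=2^{-\alpha}\abs{k_2-k_1}^{-\alpha}\abs{k_2-k_3}^{-\alpha}$.

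It remains to handle the resulting non-resonant sum. I would reindex by $k:=k_2$, $n:=k_2-k_1\neq 0$, $\ell:=k_2-k_3\neq 0$, so that $k_1=k-n$, $k_3=k-\ell$, $k_4=k-n-\ell$; then move the expectation outside by Tonelli and, pathwise, apply~\Cref{thm:convo-lemma} with $f=g=h=b$ and $r_{n,\ell}=\abs{n}^{-\alpha}\abs{\ell}^{-\alpha}\bbOne_{n\neq 0}\bbOne_{\ell\neq 0}$, pairing the leftover factor $b_k$ by Cauchy--Schwarz in $k$. This yields
\begin{equation*}
    \sum_{k,n,\ell} b_k\,b_{k-n}\,b_{k-n-\ell}\,b_{k-\ell}\,\abs{n}^{-\alpha}\abs{\ell}^{-\alpha}\le \lVert r\rVert_{\ell^2(\bbZ^2)}\,\lVert b\rVert_{\ell^2}^4 ,
\end{equation*}
and since $\lVert r\rVert_{\ell^2(\bbZ^2)}=\sum_{n\neq 0}\abs{n}^{-2\alpha}$ is finite precisely when $\alpha>\tfrac12$, taking the expectation bounds the non-resonant part by $C_\alpha\,T^{1-\alpha/2}\lVert u_0\rVert_{L^4_\omega L^2_x}^4$. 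Summing the two contributions and taking fourth roots gives the claimed estimate, with $\alpha\in(\tfrac12,2]$ forced by the summability of $r$ at the lower endpoint and by $\alpha/2\le 1$ in the interpolation at the upper endpoint.

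The main obstacle is the non-resonant sum: everything hinges on noticing that the factorization $\Phi=-2(k_2-k_1)(k_2-k_3)$ lets the gain $\abs{\Phi}^{-\alpha}$ split into two one-dimensional weights in the off-diagonal variables $k_2-k_1$ and $k_2-k_3$, which is exactly the structure that~\Cref{thm:convo-lemma} consumes — note that without such a gain the sum over $k_1-k_2+k_3-k_4=0$ need not converge, so the Gaussian average is doing essential work. The threshold $\alpha>\tfrac12$ is then just the condition $\abs{n}^{-\alpha}\in\ell^2(\bbZ)$. The resonance classification and the interpolation of the time integral are routine, as is the Fubini/Tonelli bookkeeping handled by the truncation-and-limit step.
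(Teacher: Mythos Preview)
Your proof is correct and essentially identical to the paper's: both expand $\lVert u\rVert_{L^4_{\omega,T,x}}^4$ on the Fourier side, isolate the resonant contribution as $T\lVert u_0\rVert_{L^4_\omega L^2_x}^4$, reindex the non-resonant part via the factorization $\Phi=2n\ell$, and control it by \Cref{thm:convo-lemma} together with the interpolation $\min(T,\Phi^{-2})\lesssim T^{1-\alpha/2}\abs{n\ell}^{-\alpha}$, the summability condition $\abs{n}^{-\alpha}\in\ell^2$ fixing the threshold $\alpha>\tfrac12$. The only cosmetic difference is that the paper first applies Cauchy--Schwarz with the kernel $r_{n,\ell}=\frac{1-e^{-2n^2\ell^2T}}{2n^2\ell^2}$ and then interpolates to bound $\lVert r\rVert_{\ell^2}$, whereas you interpolate first and take $r_{n,\ell}=\abs{n\ell}^{-\alpha}$; the resulting bounds coincide.
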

\begin{proof}
    Inverting the Fourier transform in~\eqref{eqn:prop-fourier-def} gives
    \begin{equation}\label{eqn:u-lin-def}
        u(t) := e^{-iW_t \Delta} u_0 = \sum_{n \in \bbZ_L}  a_k e^{i(kx + k^2 W_t)}
    \end{equation}
    where $a_k = \hat{u}_0(k)$.  Now, observe that $\lVert u \rVert_{L^4_{\omega,T,x}}^4 = \lVert (u \overline{u})^2 \rVert_{L^2_{\omega,T,x}}^2$.  Using~\cref{eqn:u-lin-def}, we find that
    \begin{equation*}
        (u \overline{u})^2 = \sum_{k \in \bbZ_L^4} a_{k_1} \overline{a_{k_2}} a_{k_3} \overline{a_{k_4}} e^{i(k_1 - k_2 + k_3 - k_4)x} e^{i(k_1^2 - k_2^2 + k_3^2 - k_4^2)W_t}
    \end{equation*}
    Thus, using the identity $\E e^{iaW_t} = e^{-a^2t/2}$ and fact that the $a_k$ are independent of the Brownian motion, we find that
    \begin{equation}\label{eqn:L4-division-homog}\begin{split}
        \E \int_{\bbT}(u \overline{u})^2\;dx =& 2\pi \E \sum_{k_1 - k_2 = k_4 - k_3} a_{k_1} \overline{a_{k_2}} a_{k_3} \overline{a_{k_4}} e^{-(k_1^2 - k_2^2 + k_3^2 - k_4^2)^2t / 2}\\
        \lesssim& \E \sum_{k_1,k_3} \abs{a_{k_1}}^2 \abs{a_{k_2}}^2 + \E \smashoperator[r]{\sum_{\substack{k_1 - k_2 = k_4 - k_3\\k_1 \neq k_2\\ k_1 \neq k_4}}} a_{k_1} \overline{a_{k_2}} a_{k_3} \overline{a_{k_4}} e^{-\frac{t(k_1^2 - k_2^2 + k_3^2 - k_4^2)^2}{2}}\\
        \lesssim& \lVert u_0\rVert_{L^4_\omega L^2_{x}}^4 + \E \sum_{k } \sum_{n,\ell \neq 0} a_k \overline{a_{k-n}} a_{k - n - \ell} \overline{a_{k - \ell}} e^{-2t n^2 \ell^2}
    \end{split}\end{equation}
    Integrating in time and applying Cauchy-Schwarz yields
    \begin{align*}
        \lVert u \rVert_{L^4_{\omega,T,x}}^4 \lesssim& T \lVert u_0 \rVert_{L^4_\omega L^2_x}^4 + \E \sum_{k}\sum_{n,\ell \neq 0} a_k \overline{a_{k-n}} a_{k - n - \ell} \overline{a_{k - \ell}} \frac{1 - e^{-2n^2 \ell^2T}}{2n^2\ell^2}\\
        \lesssim& \left(T  + \left(\sum_{n,\ell \neq 0} \frac{\left(1 - e^{-2 n^2\ell^2 T}\right)^2}{4n^4\ell^4}\right)^{1/2}\right) \lVert u_0 \rVert_{L^4_\omega L^2_x}^4
    \end{align*}
    By using the bounds $\frac{1 - e^{-2n^2\ell^2T}}{2n^2\ell^2} \leq T$ and $1 - e^{-2n^2\ell^2T} \leq 1$, we see that for $\frac{1}{2} < \alpha \leq 2$
    \begin{align*}
        \sum_{n,\ell \neq 0 } \frac{\left(1 - e^{-2 n^2\ell^2 T}\right)^2}{4n^4\ell^4}
        \lesssim&_\alpha T^{2-\alpha}
    \end{align*}
    so
    \begin{equation*}
        \lVert u \rVert_{L^4_{\omega,T,x}} \lesssim_\alpha \left(T + T^{1-\frac{\alpha}{2}}\right)^{1/4} \lVert u_0 \rVert_{L^4_\omega L^2_x}
    \end{equation*}
    as required.
\end{proof}

As explained in the introduction, we cannot obtain the inhomogeneous estimates directly from the homogeneous ones by duality in the stochastic setting.  Therefore, we must prove the inhomogeneous estimate via a more direct argument:

\begin{theorem}\label{thm:strichartz-est-inhomog}
    Let $f \in L^4(\Omega; L^{4/3}_\mathcal{P}( [0,T] \times \bbT))$.  Then, for $\frac{1}{2} < \alpha \leq 1$,
    \begin{equation*}
        \left\lVert \int_0^t e^{-i(W_t - W_s)\Delta} f(s) \;ds \right\rVert_{L^4_{\omega,T,x}} \lesssim_\alpha \left(T^{2} + T^{2-\alpha} \right)^{1/4} \lVert f\rVert_{L^4_\omega L^{4/3}_{T,x}}
    \end{equation*}
\end{theorem}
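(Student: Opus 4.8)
The plan is to mimic the direct Fourier-expansion argument used for the homogeneous estimate in \Cref{thm:strichartz-est-homog}, but now applied to the Duhamel term. Writing $w(t) = \int_0^t e^{-i(W_t - W_s)\Delta} f(s)\,ds$, in Fourier coordinates we have $\hat{w}(t,k) = \int_0^t e^{ik^2(W_t - W_s)} \hat{f}(s,k)\,ds$. As before, I would compute $\lVert w \rVert_{L^4_{\omega,T,x}}^4 = \lVert (w\bar w)^2 \rVert_{L^2_{\omega,T,x}}^2$ by expanding $(w\bar w)^2$ into a sum over $(k_1,k_2,k_3,k_4)$ with $k_1 - k_2 + k_3 - k_4 = 0$ after integrating in $x$; each term carries a four-fold time integral $\int_0^t\!\int_0^t\!\int_0^t\!\int_0^t$ over $(s_1,s_2,s_3,s_4)$ of $e^{i(k_1^2(W_t - W_{s_1}) - \cdots)}$ against $\hat f(s_1,k_1)\overline{\hat f(s_2,k_2)}\hat f(s_3,k_3)\overline{\hat f(s_4,k_4)}$, and then an expectation and a $dt$ integral on the outside.

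The crucial difference from the homogeneous case is that the $\hat f(s_j,k_j)$ are \emph{not} independent of the Brownian increments $W_t - W_{s_j}$ — they are only predictable. So I cannot simply pull the expectation through. The key step is therefore to exploit predictability: after ordering the times, say $s_1 \leq s_2 \leq s_3 \leq s_4 \leq t$ (and summing over the finitely many orderings), condition on $\mathcal F_{s_4}$. On the interval $[s_4, t]$ the only Brownian dependence is through $W_t - W_{s_4}$, which is independent of $\mathcal F_{s_4}$, so the conditional expectation of that factor is $e^{-(k_1^2 - k_2^2 + k_3^2 - k_4^2)^2 (t - s_4)/2}$ by the identity $\E e^{iaW_t} = e^{-a^2 t/2}$. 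Peeling off increments one at a time from the largest time inward — conditioning successively on $\mathcal F_{s_3}, \mathcal F_{s_2}, \mathcal F_{s_1}$ — should produce a clean product of Gaussian damping factors in the differences of consecutive times, times the deterministic quantity $\E\!\big[\,\text{(product of }\hat f\text{'s)}\big]$, which by Cauchy–Schwarz in $\omega$ and Hausdorff–Young / Hölder in $(s,x)$ can be bounded in terms of $\lVert f \rVert_{L^4_\omega L^{4/3}_{T,x}}$ (since $\ell^2_k$ duality and $L^{4/3}_x \hookrightarrow \ell^4_k$ via Hausdorff–Young match the four factors). As in the homogeneous proof, I would separate the resonant set where $\{k_1,k_3\} = \{k_2,k_4\}$ (which contributes the "$T^2$" term after the two time integrations it costs) from the genuinely non-resonant frequencies, where writing $n = k_1 - k_2$, $\ell = k_4 - k_3$ gives the phase $k_1^2 - k_2^2 + k_3^2 - k_4^2 = 2n\ell$ (up to the relabeling used in \eqref{eqn:L4-division-homog}), so the Gaussian factor is $e^{-2n^2\ell^2(\cdots)}$. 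Summing the resulting $\sum_{n,\ell\neq 0} (n^2\ell^2)^{-1}$-type series against the time integrations and interpolating between the bounds "damping factor $\leq 1$" and "damping factor $\leq cT$" exactly as in \Cref{thm:strichartz-est-homog} yields the $(T^2 + T^{2-\alpha})^{1/4}$ factor for $\tfrac12 < \alpha \leq 1$; the power of $T$ is one higher than the homogeneous case because we integrate two extra times (roughly, $ds$ on the inside and the Duhamel $ds$ on the outside both contribute).

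The main obstacle I anticipate is carrying out the iterated conditioning cleanly: one must order the four time variables, track which Brownian increments survive in each slab $[s_j, s_{j+1}]$, and verify that the surviving increment is genuinely independent of the past $\sigma$-algebra so that the Gaussian identity applies at each stage — the phases $k_j^2$ get reshuffled as increments are peeled off, and one must check the combinatorial bookkeeping produces the single effective phase $2n\ell$ on the relevant slabs and harmless phases elsewhere. A secondary technical point is keeping the $\ell^2$/Hausdorff–Young bookkeeping honest so that the four Fourier factors are distributed as two $\ell^2_k$ factors paired by Cauchy–Schwarz (via \Cref{thm:convo-lemma} or a direct estimate) and the $L^{4/3}_x \to \ell^4_k$ Hausdorff–Young embedding matches the $L^{4/3}$ norm on $f$; this is routine but must be arranged so the final constant depends only on $\alpha$.
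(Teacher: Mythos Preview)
Your plan has two problems, one repairable and one that is a genuine gap.

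First, the iterated conditioning does not work as you describe. After ordering $s_{\sigma(1)}\leq\cdots\leq s_{\sigma(4)}\leq t$ and conditioning on $\mathcal F_{s_{\sigma(4)}}$, you correctly extract the Gaussian factor $e^{-2n^2\ell^2(t-s_{\sigma(4)})}$ from the increment $W_t-W_{s_{\sigma(4)}}$. But you cannot then condition on $\mathcal F_{s_{\sigma(3)}}$ and peel off the next increment: the factor $\hat f(s_{\sigma(4)},\cdot)$ is only $\mathcal F_{s_{\sigma(4)}}$-measurable, not $\mathcal F_{s_{\sigma(3)}}$-measurable, so it remains entangled with $W_{s_{\sigma(4)}}-W_{s_{\sigma(3)}}$. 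There is no ``clean product of Gaussian damping factors times $\E[\prod\hat f]$''. This is not fatal for the non-resonant part ($n,\ell\neq 0$): a \emph{single} conditioning on $\mathcal F_{s_{\max}}$ already gives enough decay, and the remaining phases can simply be bounded by $1$. That is exactly what the paper does for its term $\mathrm I$.

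The real gap is your treatment of the resonant set. You assert that $\{k_1,k_3\}=\{k_2,k_4\}$ (equivalently $n\ell=0$) ``contributes the $T^2$ term''. Only the fully diagonal piece $n=\ell=0$ behaves this way; it is controlled by Hausdorff--Young $L^{4/3}_x\hookrightarrow\ell^4_k$. The semi-resonant piece (exactly one of $n,\ell$ vanishes) has two free frequency indices and, after summing, requires $\ell^2_k$ control on $\hat f$ --- i.e.\ $L^2_x$ control on $f$ --- which you do not have from $L^{4/3}_x$ on the torus. A direct bound of this term by $\lVert f\rVert_{L^4_\omega L^{4/3}_{T,x}}^4$ is therefore false. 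The paper's key idea here, which your proposal is missing, is to recognise that this semi-resonant contribution is (up to constants) the square of
\[
\Re\int_0^t\!\!\int_{\bbT} f(s_1,x)\,\overline{\int_0^{s_1} e^{-i(W_{s_1}-W_{s_2})\Delta} f(s_2,x)\,ds_2}\;dx\,ds_1,
\]
bound it via H\"older by $\lVert f\rVert_{L^{4/3}_{T,x}}\,\lVert\text{Duhamel}[f]\rVert_{L^4_{T,x}}$, and then absorb the resulting $T\lVert f\rVert_{L^4_\omega L^{4/3}_{T,x}}^2\lVert\text{Duhamel}[f]\rVert_{L^4_{\omega,T,x}}^2$ into the left-hand side by solving the quadratic inequality. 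Without this bootstrap step the estimate does not close.
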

\begin{proof}
    Writing
    \begin{equation*}
        f(s,x) = \sum_k a_k(s) e^{ikx}
    \end{equation*}
    we find that
    \begin{equation}\label{eqn:L-4-estimate-splitting}\begin{split}
        \left\lVert \int_0^t e^{-i(W_t - W_s)\Delta} f(s) \;ds \right\rVert_{L^4_{{\omega,T,x}}} =& \int\limits_{[0,t]^4} \sum_{k,n,\ell} A_{k,n,\ell}(s) E_{k,n,\ell}(t,s)\;ds\\
        =& \int\limits_{[0,t]^4} \sum_k\sum_{n,\ell \neq 0} A_{k,n,\ell}(s) E_{k,n,\ell}  \;ds\\
        &+  \int\limits_{[0,t]^4} \sum_{k,n} A_{k,n,0}(s) E_{k,n,0}(t,s) \;ds\\
        &+  \int\limits_{[0,t]^4} \sum_{k,n} A_{k,0,n}(s) E_{k,0,n}(t,s) \;ds\\
        &-  \int\limits_{[0,t]^4} \sum_k A_{k,0,0}(s) E_{k,0,0}\;ds\\
        =:& \rmI + \rmII_1 + \rmII_2 - \rmIII
    \end{split}\end{equation}
    where
    \begin{equation*}
        A_{k,n,\ell}(s) = 2\pi a_k(s_1) \overline{a_{k-\ell}(s_2)} a_{k - \ell - n}(s_3) \overline{a_{k - n}(s_4)}
    \end{equation*}
    and
    \begin{equation*}
         E_{k,n,\ell}(t,s) = e^{i(W_t - W_{s_4}) (k-n)^2 - i(W_t - W_{s_3}) (k-\ell-n)^2 + i(W_t - W_{s_2}) (k-\ell)^2  -i(W_t - W_{s_1}) k^2}
    \end{equation*}
    For the last term, we use the Hausdorff-Young inequality to conclude that
    \begin{equation*}\begin{split}
        \abs{\mathrm{III}} \lesssim& \int_{[0,t]^4} \lVert a_k(s_1)\rVert_{\ell^4_k}\lVert a_k(s_2)\rVert_{\ell^4_k}\lVert a_k(s_3)\rVert_{\ell^4_k}\lVert a_k(s_4)\rVert_{\ell^4_k}\;ds\\
        \lesssim& t \lVert f(s) \rVert_{L^{4/3}_{T,x}}
    \end{split}\end{equation*}
    so
    \begin{equation}\label{eqn:inhomog-Str-III}
        \E \int_0^T \abs{\mathrm{III}} \;dt \lesssim T^2 \lVert f(s) \rVert_{L^4_\omega L^{4/3}_{T,x}}^4
    \end{equation}
    To handle the first term, we write $R_j(t) = \{s \in [0,t]^4 : s_j = \max_{i=1,\cdots,4}s_i\}$, and decompose $\rmI$ as
    \begin{equation}\label{eqn:L-4-I-split}
        \rmI = \rmI_1 + \rmI_2 + \rmI_3 + \rmI_4
    \end{equation}
    where 
    \begin{align*}
        \rmI_j(t) =& 2\pi \int_{R_j(t)}  \sum_k\sum_{n,\ell \neq 0} A_{k,n,\ell}(s) E_{k,n,\ell}(t,s) \;ds
        \end{align*}
    we will focus on $\rmI_1$ (the integrals over the other regions are analogous).  We have
    \begin{align*}
        \rmI_1(t) = 2\pi \int_{R_1(t)} \sum_k\sum_{n,\ell \neq 0}  A_{k,n,\ell}(s)  e^{2i(W_t - W_{s_1})\ell n} \tilde{E}_{k,n,\ell}(s) \;ds
    \end{align*}
    where $\tilde{E}_{k,n,\ell}(s)$ is a $\mathcal{F}_{s_1}$-measurable random variable with $\abs{\tilde{E}_{k,n,\ell}(s)} = 1$ a.s.  Since $e^{2i(W_t - W_{s_1})\ell n}$ is independent of the other factors in the integral, we can write
    \begin{equation}\label{eqn:inhomog-Str-I}\begin{split}
        \lrabs{\E \int_0^T \rmI_1 \; dt} \leq & 2\pi \E \int_0^T\int_{R_1(t)} \sum_k\sum_{n,\ell \neq 0}  \abs{A_{k,n,\ell}(s)} e^{-2(t - s_1)\ell^2 n^2} \;ds\;dt\\
        \lesssim& \E \sum_{n,\ell \neq 0} \frac{1 - e^{-2T\ell^2 n^2}}{n^2\ell^2} \sum_k \int_{[0,T]^4}\abs{A_{k,n,\ell}(s)}\;ds \\
        \lesssim_\alpha & T^{1 - \alpha} \E\lVert a_k(s) \rVert_{L^1_s \ell^4_k}^4\\
        \lesssim_\alpha & T^{2-\alpha} \lVert f(s) \rVert_{L^4_\omega L^{4/3}_{T,x}}^4
    \end{split}\end{equation}
    Finally, for the middle term, we use Plancherel's theorem to write
    \begin{equation*}\begin{split}
        \rmII_1 + \rmII_2 =& 4 \pi \left(\int_{[0,t]^2} \sum_k a_k(s_1) \overline{a_k}(s_2) e^{i(W_{s_1} - W_{s_2})k^2}\;ds\right)^2\\
        =& 16\pi \left(\Re \int_0^t \int_0^{s_1} \sum_k a_k(s_1) \overline{a_k}(s_2) e^{i(W_{s_1} - W_{s_2})k^2}\;ds_2 ds_1\right)^2\\
        =& 16\pi \left(\Re \int_0^t  \int_{\bbT_L} f(s_1,x) \int_0^{s_1} \overline{e^{-i(W_{s_1} - W_{s_2})\Delta} f(s_2, x)}\;ds_2 dx ds_1\right)^2
    \end{split}\end{equation*}
    By applying H\"older's inequality, we conclude that
    \begin{equation}\label{eqn:inhomog-Str-II}\begin{split}
        \E \int_0^T \abs{\rmII_1 + \rmII_2}\; dt \lesssim& T \lVert f \rVert_{L^4_\omega L^{4/3}_{T,x}}^2 \left\lVert \int_0^t e^{-i(W_t - W_s)\Delta} f(s) \;ds \right\rVert_{L^4_{\omega,T,x}}^2
    \end{split}
    \end{equation}
    Combining~\cref{eqn:inhomog-Str-I,eqn:inhomog-Str-II,eqn:inhomog-Str-III}, we see that
    \begin{equation*}\begin{split}
        \left\lVert \int_0^t e^{-i(W_t - W_s)\Delta} f(s)\;ds \right\rVert_{L^4_{\mathrlap{\omega,T,x}}}^4 \lesssim_\alpha& \left( T^{2-\alpha} + T^2\right) \lVert f \rVert_{L^4_\omega L^{4/3}_{T,x}}^4 \\
        &+ T \lVert f \rVert_{L^4_\omega L^{4/3}_{T,x}}^2 \left\lVert \int_0^t e^{-i(W_t - W_s)\Delta} f(s) \;ds \right\rVert_{L^4_{\omega,T,x}}^2
    \end{split}\end{equation*}
    which implies the desired result.
\end{proof}
\begin{rmk}
    If we consider the estimates on the torus $\bbT_L = \bbR / (2\pi L \bbZ)$ of length $2\pi L$, then the estimates in \Cref{thm:strichartz-est-homog,thm:strichartz-est-inhomog} become
    \begin{equation*}
        \lVert e^{-iW_t\Delta} u_0 \rVert_{L^4_{\omega,T,x}} \lesssim_\alpha \left( \frac{T}{L} + L^{3/2\alpha} \left(\frac{T}{L}\right)^{1 - \frac{\alpha}{2}} \right)^{1/4} \lVert u_s \rVert_{L^4_\omega L^2_x}^4
    \end{equation*}
    and
    \begin{equation*}
        \left\lVert \int_0^t e^{-i(W_t - W_s)\Delta} f(s) \;ds \right\rVert_{L^4_{\omega,T,x}} \lesssim_\alpha \left(\left(\frac{T}{L}\right)^{2} + L^{3\alpha} \left(\frac{T}{L}\right)^{2-\alpha} \right)^{1/4} \lVert f \rVert_{L^4_\omega L^{4/3}_{T,x}}
    \end{equation*}
    respectively.  If we formally set $\alpha = 1/2$ and let $L \to \infty$ while holding $T$ fixed, we recover the estimates obtained in~\cite{debouardNonlinearSchrodingerEquation2010} for the white noise dispersion Schr\"odinger equation set on $\bbR$.
\end{rmk}

Next, we show that the $L^4$ based Strichartz estimates can also be obtained for functions lying in an $X^{s,b}$-type space.  Define $X^{s,b}_4(T)$ be the space with the norm
\begin{equation*}
    \lVert e^{-iW_t \Delta} \psi \rVert_{X^{s,b}_4(T)} = \lVert \psi \rVert_{L^4_\omega H^b_t(0,T; H^s_x)}
\end{equation*}
Recall that the standard $X^{s,b}$ spaces are twisted versions of $H^b_tH^s_x$ (see~\cite[Section 2.6]{taoNonlinearDispersiveEquations2006}).  Thus,  we can see $X^{s,b}_4(T)$ as a natural generalization of the Fourier restriction spaces introduced by Bourgain in~\cite{bourgainFourierTransformRestriction1993a} to the stochastic setting.  Our decision to define the norm in the `untwisted' form is motivated by the fact that the Fourier transform is nonlocal, and hence breaks the causal structure given by the filtration.  We have the following result:
\begin{theorem}\label{thm:X-sb-Strichartz}
    If $u \in X^{0,b}_4(T)$ with $b > 5/16$, then $u \in L^4_\omega L^4_T L^4_x$ with
    \begin{equation} \label{eqn:L4-Xsb-bound}
        \lVert u \rVert_{L^4_\omega L^4_T L^4_x} \lesssim_b \lVert u \rVert_{X^{0,b}_4(T)}
    \end{equation}
\end{theorem}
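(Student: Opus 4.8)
The plan is to expand $\lVert u\rVert_{L^4_{\omega,T,x}}^4$ in Fourier series as in the proof of \Cref{thm:strichartz-est-homog}, but keeping the time dependence of the coefficients, and then to split off a resonant piece controlled by Sobolev embedding in time, leaving a non-resonant piece to which the Brownian averaging applies. Since $H^{b'}_t(0,T)\embedsin H^b_t(0,T)$ for $b'\geq b$ (with a $T$-dependent constant), it suffices to treat $b$ close to $\tfrac5{16}$; in particular I may assume $\tfrac5{16}<b<\tfrac12$, the case $b\geq\tfrac12$ following a fortiori. Write $u = e^{-iW_t\Delta}\psi$, $a_k(t)=\hat\psi(t,k)$, so that $u(t,x)=\sum_k a_k(t)e^{i(kx+k^2W_t)}$ and $\lVert u\rVert_{X^{0,b}_4(T)}=\lVert\psi\rVert_{L^4_\omega H^b_t(0,T;L^2_x)}$. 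Expanding as in \eqref{eqn:L4-division-homog} and integrating in $x$ forces $k_1-k_2+k_3-k_4=0$; after the substitution there, the temporal phase is $e^{2in\ell W_t}$, and the resonant indices $\{n\ell=0\}$ contribute, after summing in $k$ by Plancherel, at most $\E\int_0^T\lVert\psi(t)\rVert_{L^2_x}^4\,dt=\lVert\psi\rVert_{L^4_\omega L^4_t L^2_x}^4\lesssim_T\lVert\psi\rVert_{L^4_\omega H^b_t L^2_x}^4$, using the one-dimensional embedding $H^b_t(0,T)\embedsin L^4_t(0,T)$ (valid for $b\geq\tfrac14$).

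It remains to bound the non-resonant sum $S:=\E\int_0^T\sum_k\sum_{n,\ell\neq0}a_k(t)\overline{a_{k-n}(t)}a_{k-n-\ell}(t)\overline{a_{k-\ell}(t)}\,e^{2in\ell W_t}\,dt$, and this is where the gain over the deterministic torus enters: averaging the phase by $\E e^{iaW_t}=e^{-a^2t/2}$ replaces $e^{2in\ell W_t}$ by the decaying weight $r_{n,\ell}(t):=e^{-2n^2\ell^2 t}$. For fixed $t$, Cauchy--Schwarz in $k$ followed by \Cref{thm:convo-lemma} (with $f=h=(\overline{a_k(t)})_k$, $g=(a_k(t))_k$, $r=(r_{n,\ell}(t))_{n,\ell}$) bounds the inner double sum by $\lVert a(t)\rVert_{\ell^2_k}^4\lVert r_{n,\ell}(t)\rVert_{\ell^2_{n,\ell}}\sim\lVert\psi(t)\rVert_{L^2_x}^4\lVert r_{n,\ell}(t)\rVert_{\ell^2_{n,\ell}}$. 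Since $\#\{(n,\ell)\in(\bbZ\setminus\{0\})^2:n^2\ell^2\leq1/t\}=O(t^{-1/2}\log\tfrac1t)$, a direct count gives $\lVert r_{n,\ell}(t)\rVert_{\ell^2_{n,\ell}}\lesssim_\varepsilon t^{-1/4-\varepsilon}$ for every $\varepsilon>0$, so $S\lesssim_\varepsilon\E\int_0^T\lVert\psi(t)\rVert_{L^2_x}^4\,t^{-1/4-\varepsilon}\,dt$. H\"older's inequality in $t$ with exponents $\tfrac q4$ and $(\tfrac q4)'$ bounds this by $\lesssim_{\varepsilon,T}\lVert\psi\rVert_{L^4_\omega L^q_t L^2_x}^4$ as soon as $t^{-1/4-\varepsilon}\in L^{(q/4)'}_t(0,T)$, i.e.\ $q>\tfrac{16}3$ for $\varepsilon$ small; one can pick $\tfrac{16}3<q\leq\tfrac2{1-2b}$ precisely when $b>\tfrac5{16}$, and then $H^b_t(0,T)\embedsin L^q_t(0,T)$ finishes the estimate, giving \eqref{eqn:L4-Xsb-bound}. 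Thus the threshold $\tfrac5{16}=\tfrac14+\tfrac1{16}$ is forced: $\tfrac14$ is the Sobolev exponent for $L^4_t$, and the extra $\tfrac1{16}$ is exactly what absorbs the singular weight $t^{-1/4}$ produced by the resonance count.

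The main obstacle is the averaging step. The factorization $\E[\,a_k(t)\overline{a_{k-n}(t)}a_{k-n-\ell}(t)\overline{a_{k-\ell}(t)}\,e^{2in\ell W_t}\,]=e^{-2n^2\ell^2 t}\,\E[\,a_k(t)\overline{a_{k-n}(t)}a_{k-n-\ell}(t)\overline{a_{k-\ell}(t)}\,]$ is immediate when $\psi$ is independent of the noise, but $X^{0,b}_4(T)$ must be read with $\psi$ only adapted --- as it is for the rough-path solutions to which the estimate is applied --- and then $a_k(t)$ and $W_t$ are both $\mathcal{F}_t$-measurable, so the expectation does not split. One must extract the Gaussian weight by hand: either by conditioning on $W_t$, which replaces the coefficients by their conditional expectations $\E[a_k(t)\mid W_t]$ and the weight by the decay of a Gaussian-damped Fourier transform in the $W_t$-variable; or by splitting the increment $W_t=(W_t-W_{t-\delta})+W_{t-\delta}$ as in the proof of \Cref{thm:strichartz-est-inhomog}, using the temporal regularity of $\psi$ to replace $a_k(t)$ by $a_k(t-\delta)$, iterating down to time $0$, and summing the resulting series --- each term of which still carries a genuine Gaussian factor. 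The delicate point is to carry this out without degrading the exponent $\tfrac5{16}$; everything else in the argument is routine.
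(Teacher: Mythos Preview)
Your paragraph~2 argument is valid only when $\psi$ is independent of $\{W_t\}$: the factorization
$\E\bigl[\Psi_{k,n,\ell}(t)\,e^{2in\ell W_t}\bigr]=e^{-2n^2\ell^2 t}\,\E\bigl[\Psi_{k,n,\ell}(t)\bigr]$
is false for merely adapted $\psi$, as you yourself observe in paragraph~3. Since the adapted case is exactly the one needed for \Cref{thm:rough-path-comparison}, the proposal as it stands is not a proof --- you have correctly located the obstacle and named the right remedy, but the body of the argument you have actually written covers only the trivial (independent) case.

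The paper's proof is precisely your second suggested fix, carried out explicitly; but a single shift suffices, and no ``iteration down to time $0$'' is needed. One fixes $\alpha\in\bigl(\tfrac{1}{8b-2},2\bigr)$, sets $S_{n,\ell}=(n\ell)^{-\alpha}$, and on the region $t\geq 2S_{n,\ell}$ writes $\Psi_{k,n,\ell}(t)=\Delta_s\Psi_{k,n,\ell}(t)+\Psi_{k,n,\ell}(t-s)$, averaged over $s\in[S_{n,\ell},2S_{n,\ell}]$, with the short-time region $t<2S_{n,\ell}$ handled separately. This yields three terms. The short-time term~$\rmI$ is bounded via \Cref{thm:convo-lemma} and the count $\#\{(n,\ell):2S_{n,\ell}>t\}\lesssim t^{-1/\alpha}\log(2/t)$, followed by $H^b_t\embedsin L^{p^*}_t$. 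The shifted term~$\rmIII$ now carries a genuine independent increment $W_t-W_{t-s}$ (this is where adaptedness of $\psi$ enters), producing the weight $e^{-2s(n\ell)^2}\leq e^{-2(n\ell)^{2-\alpha}}$, which is summable in $(n,\ell)$ because $\alpha<2$; no further time regularity is spent here. The difference term~$\rmII$ is where the threshold $b>\tfrac{5}{16}$ is actually forced: one expands $\Delta_s\Psi$ into pieces $\Psi^{(j)}$ with $j=1,\dots,4$ finite differences and $4-j$ shifted factors, and bounds each by $\lVert\psi\rVert_{B^{b_j}_{p_j,\infty}([0,T];L^2_x)}^{\,j}\lVert\psi\rVert_{L^p_tL^2_x}^{\,4-j}$ for a careful choice of exponents $b_j,p_j,p$ compatible with the embeddings $H^b\embedsin B^{b_j}_{p_j,\infty}$ and $H^b\embedsin L^p$. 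These parameter choices --- and in particular verifying that they are feasible for every $j$ precisely when $b>\tfrac{5}{16}$ --- are the substantive part of the proof, and calling them ``routine'' undersells what remains to be done.
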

\begin{proof}
    By using the nesting property of the $X^{0,b}_4$ spaces, we may assume without loss of generality that $b < 1/2$.  Let $u = e^{-iW_t \Delta} \psi$.  Then, we can write
    \begin{equation}\label{eqn:L4-X-sb-division}\begin{split}
        \lVert u \rVert_{L^4_{\omega,T,x}}^4 =& \lVert e^{-iW_t \Delta} \psi \rVert_{L^4_{\omega,T,x}}^4\\
        \leq& 2 \E \int_0^T \lVert e^{-iW_t\Delta} \psi(t,x) \rVert_{L^2_x}^4\;dt\\
        &- 2\pi \E \int_0^T \sum_k \abs{\hat{\psi}_k(t)}^4\;dt\\
        &+ 2\pi \E \int_0^T \sum_k \sum_{n,\ell \neq 0} e^{iW_t 2n\ell} \hat{\psi}_k(t)\overline{\hat{\psi}_{k-n}(t)}\hat{\psi}_{k-n-\ell}(t)\overline{\hat{\psi}_{k-\ell}(t)}
    \end{split}\end{equation}
    The second term on the right is manifestly negative and so can be ignored.  For the first term, the Sobolev embedding gives us that
    \begin{equation*}
        2 \E \int_0^T \lVert e^{-iW_t\Delta} \psi(t,x) \rVert_{L^2_x}^4\;dt \lesssim \lVert \psi \rVert_{L^4_\omega H^b_t(0, T; L^2_x)} = \lVert u \rVert_{X^{0,b}_4(T)}
    \end{equation*}
    Thus, it only remains to deal with the last term.  Let
    \begin{equation*}
        S_{n, \ell} = (n\ell)^{-\alpha}
    \end{equation*}
    where $\alpha \in \left(\frac{1}{8b - 2}, 2\right)$ (such an $\alpha$ exists by our assumption $b < 1/2$).  To ease notation, we will also write
    \begin{equation*}
        \Psi_{k,n,\ell}(t) = \hat{\psi}_k(t)\overline{\hat{\psi}_{k-n}(t)}\hat{\psi}_{k-n-\ell}(t)\overline{\hat{\psi}_{k-\ell}(t)}
    \end{equation*}
    With this setup, we can rewrite the last term in~\eqref{eqn:L4-X-sb-division} as
    \begin{equation*}\begin{split}
        \E \int_0^T \sum_k \sum_{n,\ell \neq 0} e^{iW_t 2n\ell} \Psi_{k,n,\ell}(t) =\hspace{-3em}&\hspace{3em} \E \int_0^T \sum_k \sum_{\substack{n,\ell \neq 0\\ 2S_{n,\ell} > t}} e^{2iW_t n\ell} \Psi_{k,n,\ell}(t)\;dt\\
        &+ \E \int_0^T \sum_k \sum_{\substack{n,\ell \neq 0\\ 2S_{n,\ell} \leq t}} \frac{e^{2iW_t n\ell}}{S_{n,\ell}}\int_{S_{n,\ell}}^{2S_{n,\ell}}\Delta_s\Psi_{k,n,\ell}(t) \;ds dt\\
        &+ \E \int_0^T \sum_k \sum_{\substack{n,\ell \neq 0\\ 2S_{n,\ell} \leq t}}  \frac{e^{2iW_t n\ell}}{S_{n,\ell}}\int_{S_{n,\ell}}^{2S_{n,\ell}} \Psi_{k,n,\ell}(t-s) \;ds dt\\
        =:& \rmI + \rmII + \rmIII
    \end{split}\end{equation*}
    so it only remains to show that $\rmI$, $\rmII$ and $\rmIII$ can be controlled by $\lVert u \rVert_{X^{0,b}_4(T)}^4$.  For $\rmI$, we note that $S_{n,\ell} \leq 1$, so we can restriction the integral to $t \in [0,2]$.  Using~\Cref{thm:convo-lemma}, we then find that
    \begin{equation*}\begin{split}
        \abs{\rmI} =& \lrabs{ \E \int_0^2 \sum_k \sum_{\substack{n,\ell \neq 0\\ 2S_{n,\ell} > t}} e^{-2iW_t n\ell} \Psi_{k,n,\ell}(t)\;dt}\\
        \leq& \E \int_0^2 \bigg( \sum_{\substack{n,\ell \neq 0\\ 2S_{n,\ell} > t}} 1 \bigg)^{1/2} \lVert \psi(t) \rVert_{L^2_x}^4 \;dt\\
        \lesssim_\alpha& \E \int_0^2 t^{-\frac{1}{2\alpha}} \sqrt{-\log (t/2)} \lVert \psi(t) \rVert_{L^2_2}^4\;dt\\
        \lesssim_\alpha& \E \lVert \psi(t) \rVert_{L^{p^*}_tL^2_x}^4\\
        \lesssim_\alpha& \lVert u \rVert_{X^{0,b}_4(T)}^4
    \end{split}\end{equation*}
    where $\frac{1}{p^*} = \frac{1}{2} - b$.  

    We now turn to $\rmII$.  Notice that we can write
    \begin{equation*}\begin{split}
        \Delta_s\Psi_{k,n,\ell}(t) = & \sum_{j=1}^4 \Psi^{(j)}_{k,n,\ell}(t,t-s)
    \end{split}\end{equation*}
    where $\Psi^{(j)}_{k,n,\ell}(t, t-s)$ contains $j$ factors of the form $\Delta_s\hat{\psi}_\bullet(t)$ and $4-j$ factors of the form $\hat{\psi}_\bullet(t-s)$.  Thus, we have that
    \begin{equation*}
        \rmII = \sum_{j=1}^4 \rmII_j
    \end{equation*}
    with
    \begin{equation*}
        \rmII_j = \E \int_0^T \sum_k \sum_{\substack{n,\ell \neq 0\\ S_{n,\ell} \leq t}} e^{2iW_t n\ell} \frac{1}{S_{n,\ell}}\int_{S_{n,\ell}}^{2S_{n,\ell}}\Psi^{(j)}_{k,n,\ell}(t,t-s) \;ds dt
    \end{equation*}
    Using~\Cref{thm:convo-lemma}, we find that
    \begin{equation*}\begin{split}
        \abs{\rmII_j} \lesssim& \E \int_0^T \int_0^2 \Bigg( \sum_{\substack{n,\ell \neq 0\\2S_{n,\ell} > s > S_{n,\ell}}} \frac{1}{S_{n,\ell}^2}\Bigg)^{1/2} \lVert \Delta_s\psi(t)\rVert_{L^2_x}^j \lVert \psi(t-s) \rVert_{L^2_x}^{4-j} \;ds dt\\
        \lesssim_\alpha& \E \int_0^T \int_0^2 s^{-1- \frac{1}{2\alpha}} \sqrt{-\log (s/2)} \lVert \Delta_s\psi(t) \rVert_{L^2_x}^j \lVert \psi(t-s) \rVert_{L^2_x}^{4-j} \;ds dt\\
    \end{split}\end{equation*}
    For $j = 4$, we have that
    \begin{equation*}\begin{split}
        \rmII_4 \lesssim_\alpha& \E \int_0^T \int_0^2 s^{-1 - \frac{1}{2\alpha} + 4b} \sqrt{-\log (s/2)} \Bigg( \frac{\lVert \Delta_s\psi(t) \rVert_{L^2_x}}{s^b}\Bigg)^4\;dsdt\\
        \lesssim_\alpha& \E \lVert \psi(t) \rVert_{B^{b - \frac{1}{4}}_{4,4}([0,T]; L^2_x)}^4\\
        \lesssim_\alpha& \lVert u \rVert_{X^{0,b}_4(T)}^4
    \end{split}\end{equation*}
    For $j = 1,2,3$, let $\delta = \min (1/48, b - 5/16)$.  Then, for $\frac{1}{p} = \frac{3}{16} - \delta$, we can find $b_j \in (0, b]$ such that
    \begin{equation*}
        \frac{1}{4} + 4\delta \leq j b_j \leq \min\bigg( j\Big( \frac{5}{16} + \delta\Big), 1 - \frac{j}{p}\bigg)
    \end{equation*}
    which allows us to write
    \begin{equation*}\begin{split}
        \abs{\rmII_j} \lesssim_\alpha& \E \int_0^2 s^{-1 - \frac{1}{2\alpha} + j b_j} \sqrt{-\log (s/2)} \int_s^T \bigg( \frac{\lVert \Delta_s\psi(t) \rVert_{L^2_x}}{s^{b_j}} \bigg)^j \lVert \psi(t-s) \rVert_{L^2_x}^{4-j}\;dtds\\
        \lesssim_\alpha& \E \Bigg \lVert \int_s^T \bigg( \frac{\lVert \Delta_s\psi(t) \rVert_{L^2_x}}{s^{b_j}} \bigg)^j \lVert \psi(t-s) \rVert_{L^2_x}^{4-j}\;dt \Bigg \rVert_{L^{r_j'}_s}
    \end{split}\end{equation*}
    where $\frac{1}{r_j} = 2 - \frac{4}{p} - jb_j$.  Defining $\frac{1}{p_j} = \frac{1}{p} - b_j$, we then have that
    \begin{equation*}\begin{split}
        \abs{\rmII_j} \lesssim_\alpha& \E \sup_{h > 0} \Bigg \lVert \int_h^T \bigg( \frac{ \lVert \Delta_h \psi(t) \rVert_{L^2_x}}{s^{b_j}} \bigg)^{j} \lVert \psi(t-s) \rVert_{L^2_x}^{4-j}\;dt \Bigg \rVert_{L^{r_j'}_s}\\
        \lesssim_\alpha& \E \lVert \psi \rVert_{B^{b_j}_{p_j,\infty}([0,T]; L^2_x)}^j \lVert \Psi(t) \rVert_{L^p_TL^2_x}^{4-j}\\
        \lesssim_\alpha& \lVert u \rVert_{X^{0,b}_4(T)}^4
    \end{split}\end{equation*}
    completing the argument for $\rmII$.

    Finally, for $\rmIII$, we note that $e^{2i(W_t - W_s)n\ell}$ is independent of $e^{2iW_sn\ell} \Psi_{k,n,\ell}(t-s)$, so we have
    \begin{equation*}\begin{split}
        \abs{\rmIII} =& \lrabs{\E \int_0^T \sum_k \sum_{\substack{n,\ell\neq 0\\ 2S_{n,\ell} \geq t}} \frac{1}{S_{n,\ell}} \int_{S_{n,\ell}}^{2S_{n,\ell}} e^{-2s(n\ell)^2} e^{2iW_sn\ell} \Psi_{k,n,\ell}(t-s)\;ds dt}\\
        \leq& \E \int_0^T \sum_k \sum_{\substack{n,\ell\neq 0\\ 2S_{n,\ell} < 2t}} \frac{1}{S_{n,\ell}} \int_{S_{n,\ell}}^{2S_{n,\ell}} e^{-2s(n\ell)^2} \abs{\Psi_{k,n,\ell}(t-s)}\;ds dt\\
        \lesssim& \E\int_0^2 \int_s^T \Biggl( \sum_{\substack{n,\ell \neq 0\\ 2S_{n,\ell} > s}} \frac{e^{-4 S_{n,\ell} (n\ell)^2}}{S_{n,\ell}^2} \Biggr)^{1/2} \lVert \psi(t-s) \rVert_{L^2_x}^4\;ds dt
    \end{split}\end{equation*}
    By writing $N = n\ell$, we can rewrite the sum as
    \begin{equation*}\begin{split}
        \sum_{\substack{n,\ell \neq 0\\ 2S_{n,\ell} > s}} \frac{e^{-4 S_{n,\ell} (n\ell)^2}}{S_{n,\ell}^2}  \sim  \sum_{N=1}^{s^{-1/\alpha}} N^{2\alpha} e^{-4N^{2-\alpha}} \tau(N)
    \end{split}
    \end{equation*}
    where $\tau(N)$ is the number of divisors of $N$.  Since $\alpha < 2$, this sum converges as $s \to 0^+$, and hence
    \begin{equation*}\begin{split}
        \abs{\rmIII} \lesssim_\alpha& \E \int_0^2 \int_s^T \lVert \psi(t-s) \rVert_{L^2_x}^4\;dtds\\
        \lesssim_\alpha& \lVert u \rVert_{X^{0,b}_4(T)}^4
    \end{split}\end{equation*}
    completing the proof.
\end{proof}

Finally, we give an $L^6$ estimate for the homogeneous equation:

\begin{theorem}\label{thm:L6-strichartz}
    Let $u_0 \in L^6_\omega L^2_x$ be independent of $\{W_t\}_{t > 0}$.  Then, for $\alpha \in (1/2, 1]$ and  $\epsilon > 0$,
    \begin{equation*}
        \left\lVert P_{\leq N} e^{-iW_t\Delta} u_0 \right\rVert_{L^6_{\omega,T,x}} \lesssim_{\alpha,\epsilon} \left(T + T^{1-\alpha}\right)^{1/6}N^\epsilon \lVert u_0 \rVert_{L^6_\omega L^2_x}
    \end{equation*}
\end{theorem}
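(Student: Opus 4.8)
The approach parallels the proof of \Cref{thm:strichartz-est-homog}. Since $P_{\leq N} u_0$ is a trigonometric polynomial with Fourier support in $\{\abs{k}\leq 2N\}$, all of the sums below are finite and we may write $u = P_{\leq N}e^{-iW_t\Delta}u_0 = \sum_{\abs{k}\leq 2N} a_k e^{i(kx + k^2 W_t)}$ with $a_k = \widehat{P_{\leq N}u_0}(k)$ independent of $\{W_t\}_{t>0}$, and then expand $\lVert u \rVert_{L^6_{\omega,T,x}}^6 = \E\int_0^T\int_{\bbT}(u\overline{u})^3\,dx\,dt$ directly. Integrating in $x$ restricts the sum over $(k_1,\dots,k_6)$ to $\Sigma := k_1 - k_2 + k_3 - k_4 + k_5 - k_6 = 0$, and then the identity $\E e^{iaW_t} = e^{-a^2 t/2}$ together with the independence of $u_0$ from the Brownian motion gives
\[
    \lVert u \rVert_{L^6_{\omega,T,x}}^6 = 2\pi\int_0^T\sum_{\Sigma = 0}\E\!\left[a_{k_1}\overline{a_{k_2}}a_{k_3}\overline{a_{k_4}}a_{k_5}\overline{a_{k_6}}\right]e^{-\Omega^2 t/2}\,dt, \qquad \Omega := k_1^2 - k_2^2 + k_3^2 - k_4^2 + k_5^2 - k_6^2 .
\]
We bound $\lvert \E[\cdots]\rvert \leq \E[\abs{a_{k_1}}\cdots\abs{a_{k_6}}]$ throughout and split the sum according to whether $\Omega = 0$ (the doubly-resonant part) or $\Omega \neq 0$.

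The combinatorics are organized via the following pairing. On $\{\Sigma = 0\}$ set $j := k_1 + k_3 + k_5 = k_2 + k_4 + k_6$, $q_+ := k_1^2 + k_3^2 + k_5^2$, $q_- := k_2^2 + k_4^2 + k_6^2$, so that $\Omega = q_+ - q_-$, and put
\[
    A_{j,q} := \sum_{\substack{k_1 + k_3 + k_5 = j\\ k_1^2 + k_3^2 + k_5^2 = q}}\abs{a_{k_1}}\abs{a_{k_3}}\abs{a_{k_5}}, \qquad r(j,q) := \#\{(k_1,k_3,k_5) : \abs{k_i}\leq 2N,\ \textstyle\sum_i k_i = j,\ \sum_i k_i^2 = q\}.
\]
Eliminating $k_5 = j - k_1 - k_3$ exhibits $r(j,q)$ as the number of lattice points $\abs{k_1},\abs{k_3}\leq 2N$ on a positive-definite conic, so $r(j,q)\lesssim_\epsilon (j^2 + q)^\epsilon \lesssim_\epsilon N^\epsilon$ by the classical divisor bound for representations by binary quadratic forms. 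Cauchy--Schwarz then gives $A_{j,q}^2 \leq r(j,q)\sum_{\{\cdots\}}\abs{a_{k_1}}^2\abs{a_{k_3}}^2\abs{a_{k_5}}^2$, and summing over $j,q$ yields the pointwise bound $\sum_{j,q}A_{j,q}^2 \lesssim_\epsilon N^\epsilon\lVert a\rVert_{\ell^2}^6 \lesssim_\epsilon N^\epsilon\lVert u_0\rVert_{L^2}^6$.

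For the $\Omega = 0$ part the time integral contributes only a factor $T$, and $\sum_{\Sigma = 0,\ \Omega = 0}\abs{a_{k_1}}\cdots\abs{a_{k_6}} = \sum_{j,q}A_{j,q}^2 \lesssim_\epsilon N^\epsilon\lVert u_0\rVert_{L^2}^6$ pointwise, so after taking $\E$ this part is $\lesssim_\epsilon T N^\epsilon\lVert u_0\rVert_{L^6_\omega L^2_x}^6$. For the $\Omega\neq 0$ part, carrying out the $t$-integral produces the weight $\tfrac{2}{\Omega^2}(1 - e^{-\Omega^2 T/2}) \leq \min(T, 2\Omega^{-2}) \leq 2\,T^{1-\alpha}\abs{\Omega}^{-2\alpha}$, using the elementary inequality $\min(a,b)\leq a^{1-\alpha}b^\alpha$ for $\alpha\in[0,1]$. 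Since $\abs{\Omega} = \abs{q_+ - q_-}$, the bound $A_{j,q_+}A_{j,q_-}\leq \tfrac12(A_{j,q_+}^2 + A_{j,q_-}^2)$ and symmetry reduce the remaining sum to $\sum_{j}\sum_{q_+}A_{j,q_+}^2\sum_{m\neq 0}\abs{m}^{-2\alpha}$, where $\sum_{m\neq 0}\abs{m}^{-2\alpha}<\infty$ precisely because $\alpha > 1/2$; applying $r(j,q)\lesssim_\epsilon N^\epsilon$ once more shows this part is $\lesssim_{\alpha,\epsilon} T^{1-\alpha}N^\epsilon\lVert u_0\rVert_{L^6_\omega L^2_x}^6$. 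Adding the two contributions and taking a sixth root gives the stated estimate (after relabelling $\epsilon$).

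The one substantive ingredient is the lattice-point bound $r(j,q)\lesssim_\epsilon N^\epsilon$: this is where the frequency cutoff $P_{\leq N}$ is essential, and it is what forces the $N^\epsilon$ loss. Everything else is routine bookkeeping with the expectations (trivially justified here since $P_{\leq N}u_0$ is a trigonometric polynomial, so every sum involved is finite) and the inequality $\min(a,b)\leq a^{1-\alpha}b^\alpha$. As in the deterministic setting of~\cite{bourgainFourierTransformRestriction1993a}, the $N^\epsilon$ loss is the reason this estimate cannot be used directly to treat the quintic equation in $L^2$.
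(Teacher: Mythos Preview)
Your proof is correct and follows essentially the same route as the paper's. Both arguments expand the sixth power in Fourier, group terms according to the pair $(k_1+k_3+k_5,\ k_1^2+k_3^2+k_5^2)$ (your $(j,q)$ is the paper's $(k,j)$), invoke the lattice-point bound $r(j,q)\lesssim_\epsilon N^\epsilon$ on the resulting positive-definite conic, and then split into resonant ($\Omega=0$) and non-resonant ($\Omega\neq 0$) parts, summing the latter via $\sum_{m\neq 0}\abs{m}^{-2\alpha}<\infty$ for $\alpha>1/2$. The only cosmetic differences are that the paper bounds the inner $(j,q_+,q_-)$-sum uniformly in $r=q_+-q_-$ before integrating in time, whereas you integrate in time first and use $\min(T,2\Omega^{-2})\leq 2T^{1-\alpha}\abs{\Omega}^{-2\alpha}$ together with $A_{j,q_+}A_{j,q_-}\leq\tfrac12(A_{j,q_+}^2+A_{j,q_-}^2)$; and the paper makes explicit the linear change of variables bringing the conic to $X^2+3Y^2=t$ before citing Bombieri--Pila, while you appeal directly to the divisor bound for binary quadratic forms.
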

\begin{proof}
    Let us denote the $k$th Fourier coefficient of $P_{\leq N} u_0$ by $a_k$, so
    \begin{equation*}
        P_{\leq N} e^{-iW_t\Delta} u_0(x) = \sum_{\abs{k} \leq 2N} a_k e^{ikx + i W_t k^2}
    \end{equation*}
    Now, $\lVert e^{-iW_t \Delta} u_0 \rVert_{L^6}^6 = \lVert (e^{-iW_t \Delta} u_0)^3 \rVert_{L^2}^2$, and we can write
    \begin{equation*}\begin{split}
        \mathcal{F}\bigl( (e^{-iW_t \Delta} u_0)^3 \bigr)(k) =& \sum_{k_1, k_2} a_{k_1} a_{k_2} a_{k - k_1 - k_2} \exp(iW_t( k_1^2 + k_2 + (k-k_1 - k_2)^2))\\
        =& \sum_{j} \sum_{(k_1, k_2) \in S_{k,j}} a_{k_1} a_{k_2} a_{k - k_1 - k_2} \exp(iW_tj)
    \end{split}\end{equation*}
    where $S_{k,j}$ is the set
    \begin{equation*}
        S_{k,j} = \bigl\{(k_1, k_2) : k_1^2 + k_2^2 + (k - k_1 - k_2)^2 = j\bigr\}
    \end{equation*}
    Thus, we can use Plancherel's identity to write
    \begin{equation}\label{eqn:L6-expansion}\begin{split}
        \lVert e^{-iW_t \Delta} u_0 \rVert_{L^6_{\omega,T,x}}^6 =& 2\pi \E \int_0^T\sum_{k} \lrabs{ \sum_{j} \sum_{(k_1,k_2) \in S_{k,j}} a_{k_1} a_{k_2} a_{k - k_1 - k_2} \exp(i W_t j) }\;dt\\
        =& 2\pi  \E\int_0^T \sum_{r} \exp\left(-\frac{t}{2}r^2\right)\;dt \sum_{j,k} \sum_{\substack{(k_1,k_2) \in S_{k,j}\\(k_1',k_2') \in S_{k,j+r}}}A_{k,k_1,k_2,k_1',k_2'}\\ 
    \end{split}\end{equation}
    where 
    \begin{equation*}
        A_{k,k_1,k_2,k_1',k_2'} = a_{k_1} a_{k_2} a_{k - k_1 - k_2} \overline{a_{k_1'} a_{k_2'} a_{k - k_1' - k_2'}}
    \end{equation*}
    The innermost sum in the $j$ and $k$ variables can be bounded independent of $r$:
    \begin{equation*}\begin{split}
        \lrabs{\sum_{j,k} \sum_{\substack{(k_1,k_2) \in S_{k,j}\\\mathclap{(k_1',k_2') \in S_{k,j+r}}}}a_{k_1} a_{k_2} A_{k,k_1,k_2,k_1',k_2'} } \leq& \sup_{\abs{n} \lesssim N} \sum_j \sum_{\substack{(k_1, k_2) \in S_{n,j}\\\mathclap{(k_1', k_2') \in S_{n, j+r}}}} \abs{a_{k_1} a_{k_2} a_{k_1'} a_{k_2'}}\lVert a_k \rVert_{\ell^2}^2\\
        \leq& \sup_{\substack{\abs{n} \lesssim N\\ \abs{j},\abs{j+r} \lesssim N^2}}  \abs{S_{n,j}}^{1/2}\abs{S_{n,j+r}}^{1/2} \lVert a_k \rVert_{\ell^2_k}^6\\
        \lesssim_\epsilon & N^\epsilon \lVert u_0 \rVert_{L^2}^6
    \end{split}\end{equation*}
    where on the last line we have used the fact that $(k_1, k_2) \in S_{n,j}$ if and only if $(3k_1 + 3k_2 - 2k, k_1 - k_2)$ is a lattice point on the ellipse
    \begin{equation*}
        X^2 + 3Y^2 = t
    \end{equation*}
    with $t = 6j - 2k^2$, and there are $O_{\epsilon}(t^{3\epsilon}) = O_{\epsilon}(N^{6\epsilon})$ such points by a result of Bombieri and Pila~\cite{bombieriNumberIntegralPoints1989} (cf.~\cite{bourgainFourierTransformRestriction1993a}).  Separating out the resonant contribution at $r = 0$ from the other contributions, we see that the $L^6$ norm is given by
    \begin{equation*}\begin{split}
        \lVert e^{-iW_t \Delta} u_0 \rVert_{L^6_{\omega,T,x}}^6 \lesssim& \left(T + \sum_{r \neq 0} \frac{1 - e^{-\frac{T}{2}r^2}}{r^2}\right) N^{6\epsilon} \lVert u_0 \rVert_{L^6_\omega L^2_x}^6\\
        \lesssim_{\epsilon,\alpha}& \left(T + T^{1-\alpha}\right)N^{6\epsilon} \lVert u_0 \rVert_{L^6_\omega L^2_x}^6
    \end{split}\end{equation*}
\end{proof}

Just as in the deterministic case, the derivative loss in~\Cref{thm:L6-strichartz} cannot be avoided.  In fact, by using Plancherel's identity in the space and time variables, we see that
\begin{equation*}
    \lVert e^{-it\Delta} u_0 \rVert_{L^6_{T,x}} = 2\pi  T \sum_{k,j} \lrabs{ \sum_{(k_1,k_2) \in S_{k,j}} a_{k_1} a_{k_2} a_{k-k_1 - k_2}}^2
\end{equation*}
This is is exactly the resonant term in~\eqref{eqn:L6-expansion}.  Thus, if we take initial data $u_0 = N^{-1/2}\sum_{k=0}^N e^{ikx}$, then we find that
\begin{equation*}
    \lVert e^{-iW_t \Delta} u_0 \rVert_{L^6_{\omega,T,x}} \geq \lVert e^{-it\Delta} u_0 \rVert_{L^6_{T,x}} \gtrsim (\log N)^{1/6}
\end{equation*}
where the last inequality comes from~\cite{bourgainFourierTransformRestriction1993a}.

\section{Global wellposedness for \texorpdfstring{$p \leq 3$}{p <= 3}\label{sec:gwp}}

Using the Strichartz estimates given in~\Cref{thm:strichartz-est-homog,thm:strichartz-est-inhomog}, we can prove global wellposedness for~\cref{eqn:WNDNLS} with $1 < p \leq 3$.  We begin by proving that the following truncated equation is locally wellposed:
\begin{equation}\label{eqn:trunc-WNDNLS}
        \left\{\begin{array}{l}
                id\utrunc = \Delta \utrunc \circ dW_t + \Theta[\utrunc](t) \abs{\utrunc}^{p-1} \utrunc\;dt\\
                \utrunc(t=0) = u_0
        \end{array}\right.
\end{equation}
where $\Theta[u](t) = \theta(\lVert \bbOne_{s < t}u(s) \rVert_{L^{4}_{s,x}})$ for a smooth cut-off function $\theta$ satisfying $\theta(x) = 1$ for $\abs{x} \leq 1$ and $\theta(x) = 0$ for $\abs{x} \geq 2$. 
\begin{prop}\label{thm:lwp-trunc-WNDNLS}
    If $u_0 \in L^4_\omega L^2_x$, then for $T$ sufficiently small depending only on $\lVert u_0 \rVert_{L^4_\omega L^2_x}$,~\eqref{eqn:trunc-WNDNLS} has a unique solution in $L^4_{\omega,T,x}$.  This solution is almost surely in $C_T^0 L^2_x$ with conserved $L^2$ norm.  Moreover, if $$\tau = \inf\{t > 0  : \lVert \bbOne_{s < t}\utrunc(s) \rVert_{L^{4}_{s,x}}  = 1\}$$ then  $P(\tau < T) \leq \frac{1}{2}$.
\end{prop}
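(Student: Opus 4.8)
The plan is to solve \eqref{eqn:trunc-WNDNLS} by a fixed-point argument applied to its Duhamel formulation. For a predictable process $u$ set
\[
\Phi(u)(t) = e^{-iW_t\Delta}u_0 \;-\; i\int_0^t e^{-i(W_t - W_s)\Delta}\,\Theta[u](s)\,\lvert u(s)\rvert^{p-1}u(s)\;ds ,
\]
which is again predictable because $\Theta[u](t)$ is $\mathcal{F}_t$-measurable, and note that the datum $u_0$ is independent of $\{W_t\}_{t>0}$, so \Cref{thm:strichartz-est-homog,thm:strichartz-est-inhomog} both apply. I would show that for $T$ small, depending only on $\lVert u_0\rVert_{L^4_\omega L^2_x}$, $\Phi$ maps a closed ball $B_R\subset L^4_{\omega,T,x}$ into itself and is a contraction there; its fixed point $\utrunc$ is the desired solution, and uniqueness in all of $L^4_{\omega,T,x}$ then follows by the usual continuation argument.

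The linear term is handled by \Cref{thm:strichartz-est-homog} with any fixed $\alpha\in(\tfrac12,2)$: $\lVert e^{-iW_t\Delta}u_0\rVert_{L^4_{\omega,T,x}}\lesssim_\alpha(T+T^{1-\alpha/2})^{1/4}\lVert u_0\rVert_{L^4_\omega L^2_x}\le R/2$ once $T$ is small. For the Duhamel term, \Cref{thm:strichartz-est-inhomog} (with $\alpha\in(\tfrac12,1]$) reduces matters to estimating the forcing $\Theta[u]\lvert u\rvert^{p-1}u$ in $L^4_\omega L^{4/3}_{T,x}$, and this is precisely where the hypothesis $1<p\le 3$ enters: since $\tfrac43 p\le 4$, H\"older in $(t,x)$ gives $\bigl\lVert\lvert u\rvert^{p-1}u\bigr\rVert_{L^{4/3}_{t,x}}=\lVert u\rVert_{L^{4p/3}_{t,x}}^p\lesssim(2\pi T)^{(3-p)/4}\lVert u\rVert_{L^4_{t,x}}^p$. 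In addition, the cutoff $\Theta[u](s)$ is supported where $\int_0^s\lVert u(r)\rVert_{L^4_x}^4\,dr\le 16$, so $\int_0^T\Theta[u](s)\lVert u(s)\rVert_{L^4_x}^4\,ds\le 16$ almost surely; combining this with the H\"older bound (a H\"older split in $s$ that keeps $\Theta[u]$ tied to the factor $\lVert u(s)\rVert_{L^4_x}^4$) yields $\lVert\Theta[u]\lvert u\rvert^{p-1}u\rVert_{L^4_\omega L^{4/3}_{T,x}}\lesssim_p T^{(3-p)/4}$, \emph{uniformly in} $\lVert u\rVert_{L^4_{\omega,T,x}}$. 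Since the prefactor $(T^2+T^{2-\alpha})^{1/4}$ from \Cref{thm:strichartz-est-inhomog} tends to $0$ as $T\to 0^+$ even at the endpoint $p=3$ (because $\alpha<2$), this gives $\Phi(B_R)\subseteq B_R$ for $T$ small. The contraction estimate comes from the same ingredients applied to $\Theta[u]\lvert u\rvert^{p-1}u-\Theta[v]\lvert v\rvert^{p-1}v$, which is split using $\bigl\lvert\lvert a\rvert^{p-1}a-\lvert b\rvert^{p-1}b\bigr\rvert\lesssim(\lvert a\rvert^{p-1}+\lvert b\rvert^{p-1})\lvert a-b\rvert$ together with $\lvert\Theta[u](s)-\Theta[v](s)\rvert\lesssim\lVert\bbOne_{r<s}(u-v)\rVert_{L^4_{r,x}}$, once more using the supports of the two cutoffs to control the spacetime $L^4$-mass of the functions that appear; this produces $\lVert\Phi(u)-\Phi(v)\rVert_{L^4_{\omega,T,x}}\le\tfrac12\lVert u-v\rVert_{L^4_{\omega,T,x}}$ for $T$ small.

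The probability estimate is then immediate: $t\mapsto\lVert\bbOne_{s<t}\utrunc\rVert_{L^4_{s,x}}=\bigl(\int_0^t\lVert\utrunc(s)\rVert_{L^4_x}^4\,ds\bigr)^{1/4}$ is continuous, nondecreasing and vanishes at $t=0$, so $\{\tau<T\}\subseteq\{\lVert\bbOne_{s<T}\utrunc\rVert_{L^4_{s,x}}\ge 1\}$ and Chebyshev gives $P(\tau<T)\le\E\lVert\bbOne_{s<T}\utrunc\rVert_{L^4_{s,x}}^4=\lVert\utrunc\rVert_{L^4_{\omega,T,x}}^4$; taking $T$ small enough that the bounds above force $\lVert\utrunc\rVert_{L^4_{\omega,T,x}}^4\le\tfrac12$ yields $P(\tau<T)\le\tfrac12$. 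For the $L^2$ statements, I would conjugate out the dispersion: with $v(t):=e^{iW_t\Delta}\utrunc(t)$ the Duhamel identity becomes $v(t)=u_0-i\int_0^t e^{iW_s\Delta}\Theta[\utrunc](s)\lvert\utrunc(s)\rvert^{p-1}\utrunc(s)\,ds$, and $\lVert\utrunc(t)\rVert_{L^2}=\lVert v(t)\rVert_{L^2}$ since $e^{iW_t\Delta}$ is an $L^2$-isometry. For data in $L^4_\omega H^1_x$ (so that the nonlinearity is $L^2_x$-valued and continuous in $t$), $v$ is $C^1$ in time with values in $L^2_x$ and
\[
\frac{d}{dt}\lVert v(t)\rVert_{L^2}^2 = 2\,\real\bigl\langle v(t),\,-i\,e^{iW_t\Delta}\Theta[\utrunc](t)\lvert\utrunc(t)\rvert^{p-1}\utrunc(t)\bigr\rangle = -2\,\Theta[\utrunc](t)\,\imag\!\int_{\bbT}\lvert\utrunc(t)\rvert^{p+1}\,dx = 0
\]
by unitarity of $e^{iW_t\Delta}$ and the gauge invariance of the nonlinearity, so $\lVert\utrunc(t)\rVert_{L^2}\equiv\lVert u_0\rVert_{L^2}$ and $\utrunc\in C^0_T L^2_x$. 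For general $u_0\in L^4_\omega L^2_x$ one approximates by such smooth data and passes to the limit via the stability furnished by the contraction; weak continuity of $\utrunc$ in $L^2_x$---obtained by testing the Duhamel formula against trigonometric polynomials and using that the forcing lies in $L^1_{T,x}$ almost surely---together with the now-constant $L^2$-norm upgrades to strong continuity, and conservation survives the limit.

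The main obstacle is the endpoint $p=3$, where H\"older leaves no slack: both the self-mapping and the contraction then depend entirely on the smallness supplied by the $T$-powers in \Cref{thm:strichartz-est-inhomog} and on the boundedness the cutoff forces on the nonlinearity, and the difference estimate has to be organized so that the weight $\lvert v\rvert^{p-1}$ of the other function is absorbed by the supports of the cutoffs rather than by an \emph{a priori} size bound. A secondary technical point is the passage to general $L^2$-data in the conservation and continuity claims, which is why the argument routes through the conjugated equation and an approximation step.
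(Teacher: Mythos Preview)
Your proposal is correct and follows essentially the same approach as the paper: a contraction mapping in $L^4_{\omega,T,x}$ using \Cref{thm:strichartz-est-homog,thm:strichartz-est-inhomog}, with the cutoff $\Theta$ supplying the uniform $L^4_{t,x}$ control needed to close both the self-mapping and the difference estimate (the paper outsources the latter to \cite[Lemma~3.3]{debouardStochasticNonlinearSchrodinger1999}, which is precisely the ``supports of the two cutoffs'' trick you sketch), followed by Markov's inequality for the stopping-time bound and a regularization argument for $L^2$ conservation and continuity. The only cosmetic differences are that the paper keeps one factor of $\lVert u\rVert_{L^4_{\omega,T,x}}$ in the self-mapping bound rather than absorbing all $p$ factors into the cutoff as you do, and it obtains $P(\tau<T)\le\tfrac12$ by a self-referential inequality for $\E\lVert\utrunc\bbOne_{t\le\tau}\rVert_{L^4_{T,x}}^4$ rather than by reading off the fixed-point ball radius.
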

\begin{proof}
    Let $\mathcal{T}(u) = e^{-iW_t \Delta} u_0 - i \int_0^t e^{-i(W_t - W_s)\Delta} \theta(\lVert \bbOne_{r < s}u(r) \rVert_{L^{2(p-1)}_{r,x}} \abs{u}^{p-1}u\;ds$, and fix a constant $\alpha \in (\frac{1}{2}, 1]$.  We will show that for $T$ sufficiently small, $\mathcal{T}$ is a contraction on $L^4_{\omega,T,x}$.  Using the Strichartz estimates for the linear equation and noting that the truncation effectively allows us to control $u$ in $L^\infty_{\omega}L^4_{T,x}$, we find that
    \begin{equation*}\begin{split}
        \lVert \mathcal{T}(u) \rVert_{L^4_{\omega,T,x}} \lesssim& \left(T + T^{1 - \frac{\alpha}{2}}\right)^{\frac{1}{4}} \lVert u_0 \rVert_{L^4_\omega L^2_x} + \left(T^2 + T^{2 - \alpha}\right)^{\frac{1}{4}} \lVert \Theta[u](t) \abs{u}^{p-1}u \rVert_{L^4_\omega L^{4/3}_{T,x}}\\
        \lesssim& \left(T + T^{1 - \frac{\alpha}{2}}\right)^{\frac{1}{4}} \lVert u_0 \rVert_{L^4_\omega L^2_x} + \left(T^2 + T^{2 - \alpha}\right)^{\frac{1}{4}} T^{\frac{1}{4}\left(1 - \frac{3}{p}\right)} \lVert u \rVert_{L^4_\omega L^{4}_{\omega,T,x}}\\
    \end{split}\end{equation*}
    and
    \begin{equation*}\begin{split}
        \lVert \mathcal{T}(u) - \mathcal{T}(v) \rVert_{L^4_{\omega,T,x}} \lesssim& \left(T^2 + T^{2 - \alpha}\right)^{1/4} \lVert \Theta[u](t) \abs{u}^{p-1}u - \Theta[v](t) \abs{v}^{p-1}v \rVert_{L^4_\omega L^{4/3}_{T,x}}\\
        \lesssim& \left(T^2 + T^{2 - \alpha}\right)^{1/4} T^{\frac{1}{4}\left( 1 - \frac{3}{p}\right)} \lVert u - v \rVert_{L^4_{\omega,T,x}}
    \end{split}\end{equation*}
    once we use~\cite[Lemma 3.3]{debouardStochasticNonlinearSchrodinger1999}) to control the difference.  A fixed point argument now gives us a local solution $\utrunc \in L^4_{\omega,T,x}$ to~\eqref{eqn:trunc-WNDNLS} for $T$ sufficiently small depending only on $L^4_\omega L^2_x$ norm of the initial data.  A standard Fourier truncation argument shows that these fixed point solutions almost surely conserve the $L^2$ mass, and are continuous in $L^2$ a.s. 
    
    It only remains to show that $\Pr(\tau < T) \leq \frac{1}{2}$.  From the Duhamel form of the equation, we see that
    \begin{equation*}\begin{split}
        \E\lVert \utrunc \bbOne_{t \leq \tau} \rVert_{L^4_{T,x}}^4 \leq& C\E\lVert e^{-iW_t\Delta} u_0 \rVert_{L^4_{T,x}}^4\\
        &+ C\E\left\lVert \int_0^t e^{-i(W_t - W_s)\Delta} \abs{\utrunc}^{p-1} \utrunc(s) \bbOne_{s \leq \tau} \;ds\right\rVert_{L^4_{T,x}}^4\\
        \leq& C\left( T + T^{1-\frac{\alpha}{2}}\right) \lVert u_0 \rVert_{L^4_{\omega}L^2_{x}}^4\\
        &+ C\left( T^2 + T^{2- \alpha}\right) T^{1 - \frac{3}{p}} \E \lVert \utrunc \bbOne_{t \leq \tau} \rVert_{L^4_{T,x}}^4
    \end{split}\end{equation*}
    Thus, for $T$ sufficiently small depending only on $\lVert u_0 \rVert_{L^4_\omega L^2_x}$, we have that
    \begin{equation*}
        \E\lVert \utrunc \bbOne_{t \leq \tau} \rVert_{L^4_{T,x}}^4 \leq \frac{1}{2}
    \end{equation*}
    which yields the desired bound by Markov's inequality.
\end{proof}

Since the solution to the truncated equation~\eqref{eqn:trunc-WNDNLS} also solves~\eqref{eqn:WNDNLS} up to the stopping time $\tau$, we can iterate the argument using the $L^2$ conservation to construct a solution $u$ to~\eqref{eqn:WNDNLS}.  The bound $\Pr(\tau < T) \leq 1/2$ guarantees this solution is global.  Moreover, the global solution $u$ is continuous in $L^2$ with conserved mass a.s., and it is the unique solution to~\eqref{eqn:WNDNLS} in $L^4_{\omega}L^4_{t,\text{loc}}L^4_x$.

Finally, we prove that the solutions we constructed in~\Cref{sec:gwp} to the cubic equation agree with the solutions constructed by Chouk and Gubinelli in~\cite{choukNonlinearPDEsModulated2015} using rough path theory.  We recall their result:
\begin{theorem}[Chouk-Gubinelli{~\cite{choukNonlinearPDEsModulated2015}}]
    If $W_t$ is a Brownian motion, then for $p = 3$~\eqref{eqn:WNDNLS} a.s. has a unique solution $v$ (in the rough path sense) with $\psi = e^{iW_t\Delta} v \in C^{1/2}_tL^2_x$.
\end{theorem}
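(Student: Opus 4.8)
The plan is to prove this using rough path theory: I reduce the equation to the profile variable $\psi$ and give meaning to the resulting Duhamel integral as a rough integral, the needed regularity being supplied by the oscillation of the noise. Because the propagator is unitary on $L^2_x$ and acts diagonally in Fourier space as in~\eqref{eqn:prop-fourier-def}, writing $\psi = e^{iW_t\Delta}v$ turns~\eqref{eqn:WNDNLS} (with $p=3$) into
\begin{equation*}
    \psi(t) = u_0 - i\int_0^t e^{iW_s\Delta}\bigl(\lvert e^{-iW_s\Delta}\psi(s)\rvert^2\, e^{-iW_s\Delta}\psi(s)\bigr)\;ds .
\end{equation*}
Expanding in Fourier coefficients $\psi_k = \hat\psi(k)$ and using the same resonance bookkeeping as in the proof of~\Cref{thm:strichartz-est-homog}, the $k$-th coefficient of the integrand is
\begin{equation*}
    \sum_{k_1 - k_2 + k_3 = k} e^{i\Phi W_s}\,\psi_{k_1}(s)\,\overline{\psi_{k_2}(s)}\,\psi_{k_3}(s), \qquad \Phi = 2(k_1 - k_2)(k_2 - k_3).
\end{equation*}
On the resonant set $\Phi = 0$ the phase disappears and the corresponding cubic term is Lipschitz in $s$, hence harmless; the entire difficulty is the off-resonant part, where the oscillatory factor $e^{i\Phi W_s}$ with $\Phi \neq 0$ must be integrated against the rough signal $W$.

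The heart of the argument is therefore to make sense of, and estimate, the averaged operator
\begin{equation*}
    \mu^{(N)}_{s,t} = \int_s^t e^{2iN(W_r - W_s)}\;dr, \qquad N = (k_1 - k_2)(k_2 - k_3) \in \bbZ\setminus\{0\},
\end{equation*}
together with its second-order (compensated) increments, uniformly in the frequency $N$. This is where regularization by noise enters: although $W$ is a.s. only H\"older-$\gamma$ for every $\gamma < 1/2$, the oscillation improves the behaviour of $\mu^{(N)}$. Using the identity $\E e^{iaW_t} = e^{-a^2 t/2}$ from the proof of~\Cref{thm:strichartz-est-homog} and its multi-time analogues, I would compute
\begin{equation*}
    \E\lvert \mu^{(N)}_{s,t}\rvert^2 = \int_s^t\int_s^t e^{-2N^2\lvert r - r'\rvert}\;dr\,dr' \lesssim \frac{\lvert t - s\rvert}{N^2},
\end{equation*}
together with the analogous higher-moment bounds $\E\lvert\mu^{(N)}_{s,t}\rvert^{2m} \lesssim_m (\lvert t-s\rvert/N^2)^m$. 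A Kolmogorov/Garsia--Rodemich--Rumsey argument then upgrades these to almost-sure H\"older-in-$(s,t)$ estimates for $\mu^{(N)}$ with a random constant decaying like $N^{-1}$; the mean-square scaling $\lvert t-s\rvert^{1/2}$ is precisely the $C^{1/2}_t$ regularity claimed. Summability in $N$ is then achieved by pairing this $N^{-1}$ decay with the $\ell^2$ decay of the Fourier coefficients (via the Cauchy--Schwarz bookkeeping of~\Cref{thm:convo-lemma}) and the divisor bound $\tau(N) = O_\epsilon(N^\epsilon)$ counting the factorizations $N = (k_1-k_2)(k_2-k_3)$, exactly as in the convergent sum appearing in the proof of~\Cref{thm:X-sb-Strichartz}.

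With these stochastic bounds in hand, the remaining steps are deterministic. The increment estimates for $\mu^{(N)}$ let me realise the off-resonant Duhamel term as a genuine rough (sewing-lemma) integral that is continuous in $\psi$; since the conjugations $e^{\pm iW_s\Delta}$ are unitary on $L^2_x$, the cubic nonlinearity is locally Lipschitz there, and a contraction mapping in the space of paths controlled by $W$, measured in $C^{1/2}_t L^2_x$, yields a unique local solution $\psi$ on a short interval $[0,T_0]$ with $T_0$ depending only on $\lVert u_0\rVert_{L^2_x}$. Mass conservation $\lVert v(t)\rVert_{L^2_x} = \lVert u_0\rVert_{L^2_x}$ --- inherited from the gauge structure of the cubic nonlinearity and the unitarity of the propagator --- supplies the a priori bound needed to iterate the local construction with a fixed step size, giving global existence and uniqueness. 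Finally, the almost-sure statement follows because the Brownian rough path and all the increment bounds of the previous paragraph hold simultaneously on a single event of full measure.

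The main obstacle is the second step: proving the almost-sure, frequency-uniform H\"older regularity of the oscillatory averaged operators $\mu^{(N)}_{s,t}$ and their compensated second-order versions. This is the genuinely stochastic input, and the deterministic sewing and fixed-point machinery of the third step is routine once it is available. The delicate points are tracking the cancellation generated by the nonresonant phase $\Phi = 2(k_1-k_2)(k_2-k_3)$, controlling the joint Gaussian moments of $W$ across several time points while keeping the $N$-dependence explicit, and ensuring that the resulting $(s,t)$-H\"older constants decay in $N$ fast enough to be summed against the divisor function $\tau(N)$.
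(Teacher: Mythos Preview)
This theorem is not proved in the paper: it is quoted from Chouk and Gubinelli~\cite{choukNonlinearPDEsModulated2015} and used as a black box to deduce \Cref{thm:rough-path-comparison}. The paper's only additional input is the observation (immediately after the statement) that the time of existence in the Chouk--Gubinelli fixed point scales like a negative power of the irregularity constant $\lVert \Phi^W\rVert_{\mathcal{W}_T^{\rho,\gamma}}$, and that this constant has Gaussian tails by~\cite{catellierAveragingIrregularCurves2016}, which places $\psi$ in $L^4_\omega C^{1/2}_T L^2_x \subset X^{0,1/2-}_4(T)$.

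Your sketch is a reasonable reconstruction of the Chouk--Gubinelli argument at the level of ideas: rewrite in the profile variable, isolate the nonresonant oscillatory phases, bound the averaged operators $\mu^{(N)}_{s,t}$ via Gaussian moment computations and Kolmogorov, then close by sewing and contraction. Two points of departure from the actual cited proof are worth noting. First, Chouk--Gubinelli do not organise the sum over nonresonant triples via the divisor function $\tau(N)$; instead they package the uniform-in-$N$ H\"older control of $\mu^{(N)}$ into a single pathwise quantity, the $(\rho,\gamma)$-irregularity norm $\lVert \Phi^W\rVert_{\mathcal{W}_T^{\rho,\gamma}}$, and the frequency summation is handled by pairing the $|N|^{-\rho}$ decay of that norm against Sobolev-type weights --- no arithmetic input is required. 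The divisor bound you invoke belongs to the present paper's proof of \Cref{thm:X-sb-Strichartz}, not to the rough-path construction. Second, the exponential-moment bound on the irregularity constant (which is what makes the random H\"older constant integrable and hence allows the passage to $L^4_\omega$) is supplied by Catellier--Gubinelli~\cite{catellierAveragingIrregularCurves2016}, not derived inside the Chouk--Gubinelli paper; your Kolmogorov step would produce a random constant with polynomial moments, which suffices for the a.s. statement but is weaker than what the present paper actually uses downstream.
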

As explained in the introduction,~\cite{choukNonlinearPDEsModulated2014} proves this result by reasoning path-by-path, so it is not immediately clear what moment inequalities the rough path solutions $v$ satisfy.  Examining their fixed point~\cite[Theorem 2.4, Propositions 3.2 and Lemma 3.5]{choukNonlinearPDEsModulated2015}, we see that for a fixed Brownian path $W$ the time of existence for their local solution $\psi^W \in C^{1/2}_tL^2_x$ scales like some negative power of the irregularity constant $\left\lVert \Phi^{W} \right\rVert_{\mathcal{W}_T^{\rho,\gamma}}$ for the path.  Thus, iterating the local construction until we reach time $T$, we see that
\begin{equation*}
    \lVert \psi^{W} \rVert_{C^{1/2}_T L^2_x} \lesssim T \lVert u_0 \rVert_{L^2} \left\lVert \Phi^{W} \right\rVert_{\mathcal{W}_T^{\rho,\gamma}}^r
\end{equation*}
for some positive power $r$.  On the other hand, by~\cite[Remark 4.5]{catellierAveragingIrregularCurves2016}, the irregularity constant for Brownian paths is exponentially square integrable, so $\psi \in L^4_\omega C^{1/2}_T L^2_x$.  It follows immediately that $v \in X^{0, 1/2-}_4(T)$, and since this space embeds in $L^4_{\omega,T,x}$, we have that $u = v$ by uniqueness.

\section{\label{sec:quintic-ill-posedness}\texorpdfstring{$L^2$}{L2} illposedness for \texorpdfstring{$p=5$}{p = 5}}

We now prove~\Cref{thm:quintic-illposedness}.  The $5$th derivative of the data to solution operator $S$ for the quintic equations is $D^5S(0)[\phi,\phi,\phi,\phi,\phi] = i5!\mathcal{A}[\phi]$, were $\mathcal{A}[\phi]$ is the operator
\begin{equation}
    \mathcal{A}[\phi](t) = \int_0^t e^{-i(W_t-W_s)\Delta} \abs{e^{-i(W_s)\Delta} \phi}^4 e^{-i(W_s)\Delta}\phi \;ds
\end{equation}
We will show that there exists a sequence $t_N \to 0$ and $\phi_N$ with $\lVert \phi_N\rVert_{L^2} = O(1)$ such that
\begin{equation*}
    \lim_{N \to \infty} \lVert \E e^{iW_t\Delta} \mathcal{A}[\phi_N](t_N)] \rVert_{L^2} = +\infty
\end{equation*}
In particular, using Minkowski's inequality and the fact that $e^{iW_t\Delta}$ is unitary, we see that
\begin{equation*}
    \E \lVert \mathcal{A}[\phi_N](t_N) \rVert_{L^2} \to \infty
\end{equation*}
so the data to solution map $S : u_0 \mapsto u$ is not $C^5$ when considered as a map $L^2 \to L^1(\Omega; C([0,T]; L^2_x))$ for any $T > 0$.  The illposedness follows from the following result:
\begin{prop}\label{prop:quintic-illposedness-example}
    Define $\phi_N = \frac{1}{N^{1/2}} \sum_{\abs{k} \leq N} e^{ikx}$.  Then, $\lVert\E \mathcal{A}[\phi_N](t) \rVert_{L^2} \gtrsim t \sqrt{\log N}$
\end{prop}
\begin{proof}
    Let $\bbZ_N = \{k \in \bbZ : \abs{k} \leq N\}$.  We calculate
    \begin{equation*}
        e^{-iW_tk^2}\widehat{\mathcal{A}}[\phi_N](t,k) = N^{-5/2} \int_0^t \sum_{\substack{\kappa \in \bbZ_N^5\\ \kappa_1 - \kappa_2 + \kappa_3 - \kappa_4 + \kappa_5 = k}} e^{iW_s\Omega(k, \kappa)}\;ds
    \end{equation*}
    where $\Omega(k,\kappa) = \kappa_1^2 - \kappa_2^2 + \kappa_3^2 - \kappa_4^2 + \kappa_5^2 - k^2$.  Taking the expectation, we have
    \begin{equation}\label{eqn:E-A}\begin{split}
        \E e^{-iW_tk^2}\widehat{\mathcal{A}}[\phi_N](t,k) =& N^{-5/2}\int_0^t \sum_{\substack{\kappa \in \bbZ_N^5\\ \kappa_1 - \kappa_2 + \kappa_3 - \kappa_4 + \kappa_5 = k}} e^{-s\Omega(k, \kappa)^2/2}\;ds\\
        \geq& tN^{-5/2} \abs{S_{N}(k)}
    \end{split}\end{equation}
    where $S_{N}(k) = \{\kappa \in \bbZ_N^5 : \Omega(k, \kappa) = 0, \kappa_1 - \kappa_2 + \kappa_3 - \kappa_4 + \kappa_5 = k\}$.  Writing $\kappa$ as
    \begin{equation}\label{eqn:kappa-reparam}
        \kappa = (k - n_1, k - n_1 - n_2, k - n_2 - n_3, k - n_2 - n_3 - n_4, k - n_2 - n_4)
    \end{equation}
    we see that $\kappa_1 - \kappa_2 + \kappa_3 - \kappa_4 + \kappa_5 = k$ and 
    \begin{equation*}
        \Omega(k, \kappa) = 2 (n_1 n_2 + n_3 n_4)
    \end{equation*}
    Now, for $\abs{k} \leq N/4$, any choice of $\abs{n_i} \leq N/4$ gives $\kappa_i \in \bbZ_N^5$.  Thus, for $\abs{k} \leq N/4$, the cardinality of $S_N(k)$ is bounded below by the number of points $(n_1,n_2,n_3,n_4) \in \bbZ_{N/4}^4$ such that $n_1n_2 + n_3n_4 = 0$.  By~\cite[Lemma 2.1]{kishimotoRemarkPeriodicMass2014}, there are at least $\sim N^2 \log N$ such points, so
    \begin{equation*}
        \lVert \E \widehat{e^{iW_t\Delta}\mathcal{A}}[\phi_N](t,k) \rVert_{L^2} \geq tN^{-5/2} \left(\sum_{\abs{k} \leq N/4} \abs{S_N(k)}^2\right)^{1/2} \gtrsim t\log N
    \end{equation*}
    as desired.
\end{proof}

By Jensen's inequality, $\E \lVert \mathcal{A}[\phi_N](t_N) \rVert_{L^2}$ blows up for $t_N = o((\log N)^{-1})$, which completes the proof of~\Cref{thm:quintic-illposedness}.

\bibliographystyle{abbrv}
\bibliography{refs}

\begin{thebibliography}{10}

\bibitem{abdullaevStableTwodimensionalDispersionmanaged2003}
F.~K. Abdullaev, B.~B. Baizakov, and M.~Salerno.
\newblock Stable two-dimensional dispersion-managed soliton.
\newblock {\em Phys. Rev. E}, 68(6):066605, Dec. 2003.

\bibitem{ablowitzDispersionManagementRandomly2004}
M.~J. Ablowitz and J.~T. Moeser.
\newblock Dispersion management for randomly varying optical fibers.
\newblock {\em Opt. Lett.}, 29(8):821, Apr. 2004.

\bibitem{agrawalNonlinearFiberOptics2019}
G.~Agrawal.
\newblock {\em Nonlinear Fiber Optics}.
\newblock {Elsevier Inc}, {Cambridge}, sixth edition, 2019.

\bibitem{agrawalApplicationsNonlinearFiber2008}
G.~P. Agrawal.
\newblock {\em Applications of Nonlinear Fiber Optics}.
\newblock {Elsevier}, {Amsterdam ; Boston}, 2nd ed edition, 2008.

\bibitem{bahouriFourierAnalysisNonlinear2011}
H.~Bahouri, J.-Y. Chemin, and R.~Danchin.
\newblock {\em Fourier Analysis and Nonlinear Partial Differential Equations}.
\newblock Number 343 in Grundlehren Der Mathematischen {{Wissenschaften}}.
  {Springer}, {Heidelberg [Germany] ; New York}, 2011.

\bibitem{belaouarNumericalAnalysisNonlinear2015}
R.~Belaouar, A.~{de Bouard}, and A.~Debussche.
\newblock Numerical analysis of the nonlinear {{Schr\"odinger}} equation with
  white noise dispersion.
\newblock {\em Stoch PDE: Anal Comp}, 3(1):103--132, Mar. 2015.

\bibitem{bombieriNumberIntegralPoints1989}
E.~Bombieri and J.~Pila.
\newblock The number of integral points on arcs and ovals.
\newblock {\em Duke Math. J.}, 59(2):337--357, Oct. 1989.

\bibitem{bourgainFourierTransformRestriction1993a}
J.~Bourgain.
\newblock Fourier transform restriction phenomena for certain lattice subsets
  and applications to nonlinear evolution equations.
\newblock {\em Geom. Funct. Anal.}, 3(2):107--156, Mar. 1993.

\bibitem{buckmasterSurfaceQuasigeostrophicEquation2018}
T.~Buckmaster, A.~Nahmod, G.~Staffilani, and K.~Widmayer.
\newblock The {{Surface Quasi-geostrophic Equation With Random Diffusion}}.
\newblock {\em Int. Math. Res. Notices}, 2018.

\bibitem{catellierAveragingIrregularCurves2016}
R.~Catellier and M.~Gubinelli.
\newblock Averaging along irregular curves and regularisation of {{ODEs}}.
\newblock {\em Stoch. Proc. Appl.}, 126(8):2323--2366, Aug. 2016.

\bibitem{chenGeneralizedRegularizedLong2017}
M.~Chen, O.~Goubet, and Y.~Mammeri.
\newblock Generalized regularized long wave equation with white noise
  dispersion.
\newblock {\em Stoch PDE: Anal Comp}, 5(3):319--342, Sept. 2017.

\bibitem{choiThresholdsExistenceDispersion2017}
M.-R. Choi, D.~Hundertmark, and Y.-R. Lee.
\newblock Thresholds for {{Existence}} of {{Dispersion Management Solitons}}
  for {{General Nonlinearities}}.
\newblock {\em SIAM J. Math. Anal.}, 49(2):1519--1569, Jan. 2017.

\bibitem{choiWellposednessDispersionManaged2020}
M.-R. Choi, D.~Hundertmark, and Y.-R. Lee.
\newblock Well-posedness of dispersion managed nonlinear {{Schr\"odinger}}
  equations.
\newblock {\em arXiv:2003.09076 [math]}, Mar. 2020.

\bibitem{choiDispersionManagedNonlinear2021}
M.-R. Choi, Y.~Kang, and Y.-R. Lee.
\newblock On dispersion managed nonlinear {{Schr\"odinger}} equations with
  lumped amplification.
\newblock {\em J. Math. Phys.}, 62(7), July 2021.

\bibitem{choukNonlinearPDEsModulated2014}
K.~Chouk and M.~Gubinelli.
\newblock Nonlinear {{PDEs}} with modulated dispersion {{II}}: {{Korteweg--de
  Vries}} equation.
\newblock {\em arXiv}, June 2014.

\bibitem{choukNonlinearPDEsModulated2015}
K.~Chouk and M.~Gubinelli.
\newblock Nonlinear {{PDEs}} with {{Modulated Dispersion I}}: {{Nonlinear
  Schr\"odinger Equations}}.
\newblock {\em Commun. Part. Diff. Eq.}, 40(11):2047--2081, Nov. 2015.

\bibitem{cimpeanNonlinearSchrodingerEquation2019}
I.~C{\^i}mpean and A.~Grecu.
\newblock The nonlinear {{Schr\"odinger}} equation with white noise dispersion
  on quantum graphs.
\newblock {\em Commun. Math. Sci.}, 19(2):405--435, Apr. 2021.

\bibitem{cohenExponentialIntegratorsNonlinear2017}
D.~Cohen and G.~Dujardin.
\newblock Exponential integrators for nonlinear {{Schr\"odinger}} equations
  with white noise dispersion.
\newblock {\em Stoch PDE: Anal Comp}, 5(4):592--613, Dec. 2017.

\bibitem{cuiStochasticSymplecticMultisymplectic2017}
J.~Cui, J.~Hong, Z.~Liu, and W.~Zhou.
\newblock Stochastic symplectic and multi-symplectic methods for nonlinear
  {{Schr\"odinger}} equation with white noise dispersion.
\newblock {\em J. Comput. Phys.}, 342:267--285, Aug. 2017.

\bibitem{debouardStochasticNonlinearSchrodinger1999}
A.~{de Bouard} and A.~Debussche.
\newblock A {{Stochastic Nonlinear Schr\"odinger Equation}} with
  {{Multiplicative Noise}}.
\newblock {\em Commun. Math. Phys.}, 205(1):161--181, Aug. 1999.

\bibitem{debouardStochasticNonlinearSchrodinger2003}
A.~{de Bouard} and A.~Debussche.
\newblock The {{Stochastic Nonlinear Schr\"odinger Equation}} in {{H}} 1.
\newblock {\em Stoch. Anal. Appl.}, 21(1):97--126, Jan. 2003.

\bibitem{debouardNonlinearSchrodingerEquation2010}
A.~{de Bouard} and A.~Debussche.
\newblock The nonlinear {{Schr\"odinger}} equation with white noise dispersion.
\newblock {\em J. Funct. Anal.}, 259(5):1300--1321, Sept. 2010.

\bibitem{debussche1DQuinticNonlinear2011}
A.~Debussche and Y.~Tsutsumi.
\newblock {{1D}} quintic nonlinear {{Schr\"odinger}} equation with white noise
  dispersion.
\newblock {\em J. Math. Pure. Appl.}, 96(4):363--376, Oct. 2011.

\bibitem{dinvayStochasticBBMType2022}
E.~Dinvay.
\newblock A stochastic {{BBM}} type equation.
\newblock {\em arXiv:2201.04085 [math]}, Jan. 2022.

\bibitem{dumontDecaySolutionsOne2021}
S.~Dumont, O.~Goubet, and Y.~Mammeri.
\newblock Decay of solutions to one dimensional nonlinear {{Schr\"odinger}}
  equations with white noise dispersion.
\newblock {\em Disc. Contin. S.}, 14(8):2877, 2021.

\bibitem{greenExponentialDecayDispersionManaged2016}
W.~R. Green and D.~Hundertmark.
\newblock Exponential {{Decay}} of {{Dispersion-Managed Solitons}} for
  {{General Dispersion Profiles}}.
\newblock {\em Lett. Math. Phys.}, 106(2):221--249, Feb. 2016.

\bibitem{hundertmarkDecayEstimatesSmoothness2009}
D.~Hundertmark and Y.-R. Lee.
\newblock Decay {{Estimates}} and {{Smoothness}} for {{Solutions}} of the
  {{Dispersion Managed Non-linear Schr\"odinger Equation}}.
\newblock {\em Commun. Math. Phys.}, 286(3):851--873, Mar. 2009.

\bibitem{kishimotoRemarkPeriodicMass2014}
N.~Kishimoto.
\newblock Remark on the periodic mass critical nonlinear {{Schr\"odinger}}
  equation.
\newblock {\em Proc. Amer. Math. Soc.}, 142(8):2649--2660, May 2014.

\bibitem{kunzeVariationalProblemLack2004}
M.~Kunze.
\newblock On a variational problem with lack of compactness related to the
  {{Strichartz}} inequality.
\newblock {\em Cal. Var.}, 19(3):307--336, Apr. 2004.

\bibitem{laurentMultirevolutionIntegratorsDifferential2020}
A.~Laurent and G.~Vilmart.
\newblock Multirevolution {{Integrators}} for {{Differential Equations}} with
  {{Fast Stochastic Oscillations}}.
\newblock {\em SIAM J. Sci. Comput.}, 42(1):A115--A139, Jan. 2020.

\bibitem{malomedPropagationOpticalPulse2001}
B.~A. Malomed and A.~Berntson.
\newblock Propagation of an optical pulse in a fiber link with
  random-dispersion management.
\newblock {\em J. Opt. Soc. Am. B}, 18(9):1243, Sept. 2001.

\bibitem{martySplittingSchemeNonlinear2006}
R.~Marty.
\newblock On a splitting scheme for the nonlinear {{Schr\"odinger}} equation in
  a random medium.
\newblock {\em Commun. Math. Sci.}, 4(4):679--705, 2006.

\bibitem{martyLocalErrorSplitting2021}
R.~Marty.
\newblock Local error of a splitting scheme for a nonlinear
  {{Schr\"odinger-type}} equation with random dispersion.
\newblock {\em Commun. Math. Sci.}, 19(4):1051--1069, 2021.

\bibitem{murphyModifiedScatteringDispersionmanaged2021}
J.~Murphy and T.~Van~Hoose.
\newblock Modified scattering for a dispersion-managed nonlinear
  {Schr\"{o}dinger Equation}.
\newblock {\em Nonlinear Diff. Equ. Appl.}, 29(1), Nov. 2021.

\bibitem{murphyWellposednessBlowupDispersionmanaged2021}
J.~Murphy and T.~Van~Hoose.
\newblock Well-posedness and blowup for the dispersion-managed nonlinear
  {Schr\"odinger} equation.
\newblock {\em arXiv:2110.08372 [math]}, Oct. 2021.

\bibitem{stanislavovaRegularityGroundState2005}
M.~Stanislavova.
\newblock Regularity of ground state solutions of dispersion managed nonlinear
  schr\"odinger equations.
\newblock {\em J. Differ. Equations}, 210(1):87--105, Mar. 2005.

\bibitem{taoNonlinearDispersiveEquations2006}
T.~Tao.
\newblock {\em Nonlinear Dispersive Equations: Local and Global Analysis}.
\newblock Number no. 106 in Conference {{Board}} of the {{Mathematical
  Sciences}} Regional Conference Series in Mathematics. {American Mathematical
  Society}, {Providence, R.I}, 2006.

\bibitem{turitsynTheoryAveragePulse1997}
S.~K. Turitsyn.
\newblock Theory of average pulse propagation in high-bit-rate optical
  transmission systems with strong dispersion management.
\newblock {\em Jetp Lett.}, 65(11):845--851, June 1997.

\bibitem{turitsynDispersionmanagedSolitonsFibre2012}
S.~K. Turitsyn, B.~G. Bale, and M.~P. Fedoruk.
\newblock Dispersion-managed solitons in fibre systems and lasers.
\newblock {\em Phys. Rep.}, 521(4):135--203, Dec. 2012.

\bibitem{zharnitskyStabilizingEffectsDispersion2001}
V.~Zharnitsky, E.~Grenier, C.~K. Jones, and S.~K. Turitsyn.
\newblock Stabilizing effects of dispersion management.
\newblock {\em Physica D}, 152--153:794--817, May 2001.

\bibitem{zharnitskyGroundStatesDispersionmanaged2000}
V.~Zharnitsky, E.~Grenier, S.~K. Turitsyn, C.~K. R.~T. Jones, and J.~S.
  Hesthaven.
\newblock Ground states of dispersion-managed nonlinear {{Schr\"odinger}}
  equation.
\newblock {\em Phys. Rev. E}, 62(5):7358--7364, Nov. 2000.

\end{thebibliography}
\end{document}